\newcounter{num}[section]
\newenvironment{theorem}
{\refstepcounter{num}%
\bigskip\noindent\nopagebreak[4]{\bf Theorem~\arabic{section}.\arabic{num}. }\it}
\newenvironment{proposition}
{\refstepcounter{num}%
\bigskip\noindent\nopagebreak[4]{\bf Proposition~\arabic{section}.\arabic{num}. }\it}
\newenvironment{corollary}
{\refstepcounter{num}%
\bigskip\noindent\nopagebreak[4]{\bf Corollary~\arabic{section}.\arabic{num}. }\it}
\newenvironment{lemma}
{\refstepcounter{num}%
\bigskip\noindent\nopagebreak[4]{\bf Lemma~\arabic{section}.\arabic{num}. }\it}
\newenvironment{example}
{\refstepcounter{num}%
\bigskip\noindent\nopagebreak[4]{\bf Example~\arabic{section}.\arabic{num}. }}
\newcommand{\LL}{{\mathcal{L}}}
\newcommand{\Ss}{{\mathbf{S}}}
\newcommand{\V}{{\mathrm{V}}}
\newcommand{\supp}{{\mathrm{supp}}}
\newcommand{\Ann}{{\mathrm{Ann}}}
\newcommand{\om}{{\omega}}
\newcommand{\pr}{{\prime}}
\newcommand{\al}{{\alpha}}
\newcommand{\g}{{\mathbf{g}}}
\newcommand{\h}{{\mathbf{h}}}
\renewcommand{\r}{{\mathbf{r}}}
\newcommand{\Vbf}{{\mathbf{V}}}
\newcommand{\one}{{\mathbf{1}}}
\newcommand{\zero}{{\mathbf{0}}}
\newcommand{\qq}{{\mathbf{q}_\om}}
\begin{document}

\title{Direct products, varieties, and compactness conditions}
\author{M. Shahryari, A. N. Shevlyakov}
\maketitle
\abstract{We study equationally Noetherian varieties of groups, rings and monoids. Moreover, we describe equationally Noetherian direct powers for these algebraic structures.}
\section{Introduction}

A variety is one of significant notions in universal algebraic geometry. Let us explain its importance. Let $\LL$ be a language. An $\LL$-equation is an atomic formula $t(X)=s(X)$, where $t,s$ are $\LL$-terms. Let $A$ be an algebraic structure of $\LL$ ($\LL$-algebra). Suppose we are dealing with $\LL$-equations over $A$, and $A$ belongs to a certain variety $\Vbf$ of $\LL$-algebras. Then we can easily simplify $\LL$-terms (parts of equations) using the identities of $\Vbf$. For example, any term $t(X)$ over a semigroup $S$ admits an elimination of brackets, since $S$ satisfies the associativity law (identity) $x(yz)=(xy)z$. Similarly, any ring with zero multiplication admits a reduction of any term to a multiplication-free expression.

Another important notion in universal algebraic geometry is the property of being equationally Noetherian. Recall that an algebraic structure $A$ is  $\LL$-equationally Noetherian if any system of $\LL$-equations is equivalent over $A$ to its finite subsystem. The problem about the connections between varieties and equationally Noetherian algebras was posed by B.~Plotkin in~\cite{Plotkin}.

\bigskip

\noindent {\bf Problem 1.} Is there a variety $\Vbf$ of $\LL$-algebras such that every $A\in\Vbf$ is  $\LL$-equationally Noetherian algebra?

\bigskip

This problem has positive solutions for many varieties. For example, in~\cite{ModSH} it was proved that all elements of a group variety $\mathbf{V}$ are $\LL_g$-equationally Noetherian ($\LL_g=\{\cdot,{ }^{-1},1\}$ is the group language), if and only if the free group $F_{\mathbf{V}}(X)\in\Vbf$ has the $\mathrm{max-n}$ property for every finite set $X$. It follows that the variety of all metabelian groups satisfies Problem~1.

Let us formulate the central problem of the current paper (it slightly differs from the original Plotkin`s problem).

\bigskip

\noindent {\bf Problem 2.} Is there a variety $\Vbf$ of $\LL$-algebras such that each $A\in\Vbf$ is  $\LL(A)$-equationally Noetherian algebras?

\bigskip

The language $\LL(A)=\LL\cup \{a\mid a\in A\}$ above is the extension of $\LL$ by new constant symbols $a\in A$ corresponding to all elements of $A$. Recall that $\LL(A)$ defines the wider class of equations than $\LL$, because it allows to use constants in equations. Thus, an $\LL$-equationally Noetherian algebra $A$ is not necessarily $\LL(A)$-equationally Noetherian.

We prove (Theorems~\ref{th:group_var_from_N},\ref{th:rings_var_from_N}) that a variety of groups (rings) satisfies Problem~2, if and only if it is abelian (respectively, has zero multiplication). The similar result holds for monoids (Theorem~\ref{th:monoid_var_from_N}).

The obtained results are based on equational properties of direct powers of groups (Theorem~\ref{th:group_restricted_power_is_not_q}) rings (Theorem~\ref{th:ring_restricted_power_is_not_q}) and monoids (Theorem~\ref{th:monoid_restricted_power_is_not_q}). Moreover, in Theorem~\ref{th:magma_restricted_power_is_not_q} we consider direct powers of arbitrary algebras with binary operation (magmas). Actually, in such theorems we prove that an infinite direct power of a certain group (ring, monoid, magma) is not $\qq$-compact.

Let us mention an application of our results. In~\cite{BMRom} it was proved that any wreath product (restricted and unrestricted) $W=A\mathrm{wr}B$ of a non-abelian group $A$ and infinite group $B$ is not $\LL(W)$-equationally Noetherian. Theorem~\ref{th:group_restricted_power_is_not_q} allows us to obtain a simple proof of this fact.

In Section~\ref{sec:var_of_groups_in_L_g} we deal with the language $\LL_g$ (the language of groups), and we study axiomatic ranks and finite basis problem for group varieties $\Vbf$ such that each $G\in\Vbf$ is $\LL_g$-equationally Noetherian.

\section{Preliminaries}

Let us give some basic notions of universal algebraic geometry following~\cite{DMR1}--~\cite{DMR3}. Let $\LL$ be a functional language. In the current paper we consider as special cases, languages of the following types: $\LL_m=\{\cdot,1\}$ (monoid language), $\LL_g=\{\cdot,{}^{-1},1\}$ (group language), $\LL_r=\{+,-,\cdot,0\}$ (ring language).

Let $A$ be an algebraic structure of the language $\LL$ ($\LL$-algebra). An {\it equation over  $\LL$ ($\LL$-equation)} is an equality of two terms of $\LL$:
$$
p(X)=q(X).
$$

The examples of equations in various languages are: $[x,y]=1$ (here $[x,y]=x^{-1}y^{-1}xy$), $x^{-1}y^3x=y^2$ (language $\LL_g$); $xy=yx$, $x^2=1$ (language $\LL_m$);  $x^2+y^2=z^2$, $xy+xy+yz=0$ (language $\LL_r$).

A system of $\LL$-equations ($\LL$-system for shortness) is an arbitrary set $\Ss$ of $\LL$-equations. {\it Notice that we will consider only systems which depend on a finite set of variables $X=\{x_1,x_2,\ldots,x_n\}$}. The set of all solutions of $\Ss$ in $A$ is denoted by $\V_A(\Ss)\subseteq A^n$.

An $\LL$-algebra $A$ is {\it $\LL$-equationally Noetherian} if any infinite $\LL$-system $\Ss$ is equivalent over $A$ to some finite subsystem $\Ss^\pr\subseteq \Ss$. An $\LL$-algebra $A$ is {\it $\qq$-compact} in the language $\LL$ if for any infinite $\LL$-system $\Ss$ and an $\LL$-equation $p(X)=q(X)$ such that
$$
\V_A(\Ss)\subseteq \V_A(p(X)=q(X))
$$
there exists a finite subsystem $\Ss^\pr\subseteq\Ss$ with
$$
\V_A(\Ss^\pr)\subseteq \V_A(p(X)=q(X)).
$$
According to the definitions, any $\LL$-equationally Noetherian algebra is $\qq$-compact in $\LL$.

Let $A$ be an $\LL$-algebra. By $\LL(A)$ we denote the language $\LL\cup\{a\mid a\in A\}$ extended by new constants symbols which correspond to elements of $A$. The language extension allows us to use constants in equations. The examples of equations in extended languages are the following: $[x,g]=1$ (language $\LL_g(G)$, $g\in G$); $xm=mx$ (language $\LL_m(M)$, $m\in M$); $r\cdot x^2+r^\pr\cdot y^2=0$ (language $\LL_r(R)$, $r,r^\pr\in R$). Obviously, the class of $\LL(S)$-equations is wider than the class of $\LL$-equations, so an $\LL$-equationally Noetherian algebra may lose this property in the language $\LL(A)$. For simplicity, we will say that $A$ is {\it equationally Noetherian} if $A$ is $\LL(A)$-equationally Noetherian and $A$ is {\it $\qq$-compact} is it is $\qq$-compact in the language $\LL(A)$.

The following proposition contains simple examples of equationally Noetherian algebraic structures.

\begin{proposition}
\begin{enumerate}
\item Any abelian group $G$ is equationally Noetherian.
\item Any ring $R$ with zero multiplication is equationally Noetherian.
\end{enumerate}
\label{pr:examples_of_noeth}
\end{proposition}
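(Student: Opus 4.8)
The plan is to treat both statements at once by \emph{linearising} equations modulo the identities of the variety and then appealing to the fact that $\mathbb{Z}^n$ is a Noetherian $\mathbb{Z}$-module. Write $G$ additively. In an abelian group, commutativity together with the group axioms (e.g.\ $(xy)\1=x\1y\1$) let one rewrite every $\LL_g(G)$-term $p(x_1,\dots,x_n)$ in the form $c_1x_1+\dots+c_nx_n+g$ with $c_i\in\mathbb{Z}$, $g\in G$; hence every $\LL_g(G)$-equation is equivalent over $G$ to one of the shape $\sum_i c_ix_i=g$, which I record as a pair $(\c,g)\in\mathbb{Z}^n\times G$. For a ring $R$ with zero multiplication the identities $xy=0$ and (for a constant $r$) $r\cdot x=0$ allow one to erase every product occurring in an $\LL_r(R)$-term, so the same linearisation produces pairs $(\c,g)\in\mathbb{Z}^n\times R$. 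Thus it suffices to prove the following uniform statement: if $A$ is an abelian group and an $n$-variable system $\Ss$ is identified with a subset of $\mathbb{Z}^n\times A$ (the pair $(\c,g)$ standing for $\sum_i c_ix_i=g$), then $\Ss$ is equivalent over $A$ to a finite subsystem.

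For a fixed tuple $\x\in A^n$ the set of pairs $(\c,g)\in\mathbb{Z}^n\times A$ satisfied by $\x$ is a subgroup, since $\lb\c+\c^\pr,\x\rb=\lb\c,\x\rb+\lb\c^\pr,\x\rb$ and $\lb-\c,\x\rb=-\lb\c,\x\rb$. Consequently $\V_A(\Ss)=\V_A(\lb\Ss\rb)$, where $\lb\Ss\rb\le\mathbb{Z}^n\times A$ is the subgroup generated by $\Ss$; and if a finitely generated subgroup happens to be generated by a set $\Sigma$, then it is already generated by a finite subset of $\Sigma$ (express each of finitely many generators as a finite $\mathbb{Z}$-combination of elements of $\Sigma$ and collect those elements).

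Now put $T=\lb\Ss\rb$, let $\pi\colon\mathbb{Z}^n\times A\to\mathbb{Z}^n$ be the first projection, and set $T_0=T\cap(\{\zero\}\times A)$. If $T_0$ is trivial, then $\pi$ restricts to an injection on $T$, so $T$ is isomorphic to a subgroup of $\mathbb{Z}^n$ and hence finitely generated (this is the sole appeal to Noetherianity); by the previous paragraph there is a finite $\Ss^\pr\subseteq\Ss$ with $\lb\Ss^\pr\rb=T$, whence $\V_A(\Ss^\pr)=\V_A(\lb\Ss^\pr\rb)=\V_A(\lb\Ss\rb)=\V_A(\Ss)$. If $T_0$ is nontrivial, pick $g\ne\zero$ with $(\zero,g)\in T_0$ and express it as a finite $\mathbb{Z}$-combination of elements $(\c_1,g_1),\dots,(\c_m,g_m)\in\Ss$; then $\Ss^\pr=\{(\c_j,g_j):1\le j\le m\}$ is a finite subsystem of $\Ss$ with $(\zero,g)\in\lb\Ss^\pr\rb$, and since the equation $(\zero,g)$ — i.e.\ ``$g=\zero$'' — has no solution in $A$, both $\V_A(\Ss^\pr)$ and $\V_A(\Ss)$ are empty, so $\Ss^\pr$ is again equivalent to $\Ss$.

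The only genuinely Noetherian input is thus that submodules of $\mathbb{Z}^n$ are finitely generated; the point that needs a little care is the inconsistent case, since selecting finitely many equations of $\Ss$ whose coefficient vectors already generate the lattice $\pi(T)$ does not by itself pin down the right-hand sides, and one must instead manufacture a finite \emph{inconsistent} subsystem. The real work is done by the linearisation step, which uses abelianness (respectively zero multiplication) in an essential way — consistent with the later theorems, where non-abelian groups and rings with nonzero multiplication are shown to violate Problem~2.
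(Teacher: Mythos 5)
Your proof is correct, but it does not follow the paper's route. The paper disposes of part (1) by citation (the equational Noetherianity of abelian groups is quoted as well known from \cite{DMR1,DMR2}) and, for part (2), argues that zero multiplication reduces every $\LL_r(R)$-equation to one of the shapes $x_i=x_j$, $x_i=a$, $a=b$, so that a system cannot contain infinitely many pairwise inequivalent equations. Your argument is self-contained and uniform: both cases are linearised to equations $\sum_i c_ix_i=g$ over an abelian group $A$, the system is replaced by the subgroup of $\mathbb{Z}^n\times A$ it generates (legitimate because the set of pairs satisfied by a fixed tuple $\x$ is exactly the graph of the homomorphism $\c\mapsto\langle \c,\x\rangle$, hence a subgroup), and the only Noetherian input is that subgroups of $\mathbb{Z}^n$ are finitely generated; the inconsistent case, where the generated subgroup meets $\{\zero\}\times A$ nontrivially, is treated separately by extracting a finite inconsistent subsystem. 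This buys you strictly more than the paper's sketch: the reduction stated in the paper's proof of part (2) does not literally cover equations such as $2x=a$ or $x_1+x_2=a$, which are of none of the three listed shapes, whereas your lattice argument handles the general linear form directly and simultaneously yields a genuine proof of part (1) instead of a reference. The cost is only length, plus the care you correctly take at the one delicate point, namely that choosing finitely many equations whose coefficient vectors generate $\pi(T)$ does not control the right-hand sides, so the inconsistent case must be handled by a separate construction.
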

\begin{proof}
The first statement is well-known in universal algebraic geometry (see~\cite{DMR1,DMR2}).

Let $R$ be a ring with zero multiplication, i.e. $ab=0$ for all $a,b\in R$. Therefore, any $\LL_r(R)$-equation is equivalent to one of the following simple expressions $x_i=x_j$, $x_i=a$, $a=b$, where $a,b\in R$. So any system $\Ss$ does not contain an infinite number of pairwise non-equivalent $\LL_r(R)$-equations. Thus, $\Ss$ should be equivalent to its finite subsystem.
\end{proof}

Let $A$ be an $\LL$-algebra. The infinite \textit{direct power} $A^{\infty}=\prod_{i=1}^\infty A$  is an $\LL$-algebra of all sequences
\[
(a_1,a_2,\ldots,a_n,\ldots),\; a_i\in A,
\]
and the definition of each function $f\in\LL$ over $A^{\infty}$ is element-wise.

Let $G^\infty$ be the direct power of a group (monoid) $G$, and $\g=(g_1,g_2,\ldots)\in G^\infty$. Then the {\it support} $\supp(\g)$ is the set of indexes with $g_i\neq 1$. The support $\supp(\r)$ of an element $\r=(r_1,r_2,\ldots)$ in the direct power $R^\infty$ of a ring $R$ is defined as $\{i\mid r_i\neq 0\}$.

The {\it restricted direct power} $G^{(\infty)}$ of a group $G$ is a subgroup in $G^\infty$ of all elements $\g$ with $|\supp(\g)|<\infty$. Similarly, one can define the  restricted direct power for monoids and rings.

\section{Direct products}

\begin{theorem}
Let $G$ be a non-abelian group. Then the restricted direct power $G^{(\infty)}$ is not $\qq$-compact.
\label{th:group_restricted_power_is_not_q}
\end{theorem}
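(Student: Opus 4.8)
The plan is to exhibit an explicit infinite system $\Ss$ over $G^{(\infty)}$ in a single variable (or two variables) together with a single equation $p=q$ that is a consequence of $\Ss$ over $G^{(\infty)}$, but not a consequence of any finite subsystem. The natural source of noncommutativity is a pair $a,b \in G$ with $[a,b]\neq 1$. I would work with the element $\c = (c_1, c_2, \ldots) \in G^{(\infty)}$ built from copies of $a$ placed in carefully chosen coordinates, and use constants from $G^{(\infty)}$ of the form $\g_n = (1,\ldots,1,b,1,\ldots)$ ($b$ in the $n$-th coordinate) in the equations. Since every element of $G^{(\infty)}$ has finite support, the key asymmetry is: a single variable $x$ ranging over $G^{(\infty)}$ always has finite support, so it can only ``fail to commute'' with finitely many of the $\g_n$.

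Concretely, I would take the system $\Ss = \{\, [x,\g_n] = 1 \;:\; n \in \N \,\}$ in one variable $x$, where $\g_n \in G^{(\infty)}$ is the constant with $b$ in coordinate $n$ and $1$ elsewhere. The solution set $\V_{G^{(\infty)}}(\Ss)$ consists of those $\x$ commuting with $b$ in every coordinate, in particular of all $\x$ whose every coordinate lies in the centralizer $C_G(b)$; but more importantly, I want the solution set to be forced to lie inside $\V$ of some extra equation. The cleaner route: add a second variable $y$ and use the equation $p(x,y)=q(x,y)$ given by $[x,y]=1$ together with the system $\Ss' = \{\, [x, \g_n] = [y,\g_n] \;:\; n\in\N\,\} \cup \{\text{something pinning } x,y\}$. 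Here is the mechanism I would actually push: because supports are finite, for any fixed $\x,\y \in G^{(\infty)}$ the equations $[x,\g_n]=[y,\g_n]$ hold for all but finitely many $n$ automatically (both sides are $1$ once $n$ exceeds $\max(\supp\x)\cup\max(\supp\y)$), so the full system and a large-enough finite subsystem have almost the same solution sets — this forces me to choose the $\g_n$ and the target equation so that the \emph{finitely many} surviving constraints genuinely differ from the infinite family. The standard trick is to let $\g_n$ have $b$ in \emph{two} coordinates, say coordinates $1$ and $n$, so that $[x,\g_n]=1$ couples coordinate $1$ of $x$ with coordinate $n$ of $x$ in a way that no finite subsystem can detect; then the ``limit'' equation $[x_{(1)}, b] = 1$ (expressed via a constant) follows from $\Ss$ but from no finite piece, because a finite subsystem only sees finitely many coordinates of $x$ and one can build a solution of the finite subsystem with $x_{(1)}$ not centralizing $b$.

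So the concrete construction I would finalize: fix $a,b\in G$ with $[a,b]\neq 1$. For each $n\geq 2$ let $\g_n \in G^{(\infty)}$ be the element equal to $b$ in coordinates $1$ and $n$, and $1$ elsewhere; let $\d \in G^{(\infty)}$ be $b$ in coordinate $1$ only. Take $\Ss = \{\, [x, \g_n] = 1 : n \geq 2 \,\}$ and the target equation $[x,\d] = 1$. Step one: show $\V(\Ss) \subseteq \V([x,\d]=1)$. If $\x$ satisfies all $[x,\g_n]=1$, then since $\x$ has finite support there is $m \notin \supp(\x)$ with $m\geq 2$; evaluating $[x,\g_m]=1$ in coordinate $1$ gives $[x_{(1)}, b] = 1$ (coordinate $m$ of $x$ is $1$, so that coordinate imposes nothing), i.e. $\x \in \V([x,\d]=1)$. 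Step two: show no finite subsystem suffices — given any finite $\Ss'=\{[x,\g_{n_1}]=1,\ldots,[x,\g_{n_k}]=1\}$, pick a coordinate $j\notin\{1,n_1,\ldots,n_k\}$ and define $\x$ by $x_{(j)} = a$, all other coordinates $1$; then $\supp(\x)=\{j\}$, so $[x,\g_{n_i}]$ is trivial in coordinate $1$ (it is $[1,b]=1$) and in coordinate $n_i$ (it is $[1,b]=1$), hence $\x\in\V(\Ss')$, but $[x,\d]$ in coordinate $1$ is $[1,b]=1$... I need $x_{(1)}=a$ instead: set $x_{(1)}=a$ and all else $1$; then for each $i$, $[x,\g_{n_i}]$ in coordinate $1$ is $[a,b]\neq 1$, which is bad. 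The fix is to also not put anything in coordinate $1$ of the $\g$'s; I would instead index so that $\g_n$ is $b$ in coordinates $n$ and $n+1$, and the target equation relates all ``consecutive pairs''. The main obstacle, and the step I expect to spend real care on, is exactly this bookkeeping: arranging the overlap pattern of the supports of the $\g_n$ so that (a) the infinite system forces the target equation via a finite-support/escape-to-infinity argument, yet (b) every finite subsystem admits a solution violating the target, by planting a nontrivial commutator in a coordinate untouched by that finite subsystem. Once the index pattern is pinned down, both verifications are short coordinatewise computations using only $[a,b]\neq 1$ and finiteness of supports.
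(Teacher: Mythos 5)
Your overall strategy (exploit finiteness of supports, then plant a non-commuting pair in a coordinate that a finite subsystem cannot see) is the right one, but the proposal never arrives at a working system/target pair, and the gap is genuine rather than cosmetic. Every concrete construction you write down either fails for a reason you yourself notice (when $\g_n$ has $b$ in coordinate $1$, already the single equation $[x,\g_2]=\one$ forces $[x_{(1)},b]=1$, so every nonempty finite subsystem implies the target), or is left as an unrealized sketch: ``the target equation relates all consecutive pairs'' is not a single equation, and in the restricted power there is no constant $(b,b,b,\ldots)$ with infinite support that could encode such a condition. This last point is the real obstruction to all of your one-variable attempts: any single target equation in $\LL(G^{(\infty)})$ uses constants with finite total support, so outside that support it degenerates, and it is hard to make it a consequence of the full system but of no finite subsystem.

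The paper resolves exactly these two difficulties with two ideas you did not reach. First, instead of constants supported on one or two coordinates, it uses constants with \emph{growing} supports $(a,1,1,\ldots), (a,a,1,\ldots), (a,a,a,\ldots),\ldots$, and takes the union of these systems over \emph{all} $a\in G$; the full system then forces every coordinate of $x$ to lie in $Z(G)$, while any finite subsystem only constrains the first $n$ coordinates. Second, it uses two variables and the \emph{constant-free} target equation $[x,y]=\one$: two elements of $Z(G^{(\infty)})$ always commute, so the inclusion $\V_{G^{(\infty)}}(\Ss(x,y))\subseteq\V_{G^{(\infty)}}([x,y]=\one)$ holds, yet a constant-free equation is evaluated coordinatewise everywhere, so placing $\al$ and $\beta$ with $[\al,\beta]\neq 1$ in coordinate $n+1$ of $\g$ and $\h$ defeats any finite subsystem. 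You flirt with the two-variable idea but discard it because your candidate equations $[x,\g_n]=[y,\g_n]$ hold automatically for large $n$; the fix is not to compare $x$ with $y$ inside the system, but to make the system force centrality of each variable separately and let the constant-free equation $[x,y]=\one$ do the comparison. Note also that taking the union over all $a\in G$ (not just over your fixed pair $a,b$) is essential: coordinates lying in $C_G(a)\cap C_G(b)$ need not commute with each other, whereas coordinates in $Z(G)$ do.
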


\begin{proof}
Let $a\in G$, and $\Ss_a(x)$ be an infinite system of $G^{(\infty)}$-equations defined as follows:
$$
\Ss_a(x)=
\begin{cases}
[x,(a,1,1,1\ldots)]=\one,\\
[x,(a,a,1,1\ldots)]=\one,\\
[x,(a,a,a,1\ldots)]=\one,\\
\ldots\\
\end{cases}
$$
Denote $\Ss(x)=\bigcup_{a\in G}\Ss_a$. Obviously,
$$
\V_{G^{(\infty)}}(\Ss(x))=\{\g=(g_1,g_2,g_3\ldots)\mid g_i\in Z(G), \supp(\g)<\infty \}.
$$
and therefore $\V_{G^{(\infty)}}(\Ss(x))=Z(G^{(\infty)})$ (here $Z(G)$ is the center of $G$).

Let $\Ss(x,y)=\Ss(x)\cup\Ss(y)$ ($\Ss(y)$ is a clone of $\Ss(x)$, where all occurrences of $x$ are replaced to $y$). Then
$$
\V_{G^{(\infty)}}(\Ss(x,y))=\{(\g,\h)\mid \g,\h\in Z(G^{(\infty)})\}.
$$
and the following inclusion
\begin{equation}
\V_{G^{(\infty)}}(\Ss(x,y))\subseteq\V_{G^{(\infty)}}([x,y]=\one)
\end{equation}
holds.

Let us consider a finite subsystem $\Ss^\pr(x,y)$ of $\Ss(x,y)$. Without loss of generality one can assume that
$$
\Ss^\pr(x,y)=\bigcup_{a\in T} \left(\Ss_a^\pr(x)\cup\Ss_a^\pr(y)\right),
$$
where $T$ is a finite subset of $G$ and $\Ss_a^\pr(x),\Ss_a^\pr(y)$ are the first $n$ equations of the systems $\Ss_a(x),\Ss_a(y)$ respectively.

Since $G$ is not abelian, there exists a pair $\al,\beta\in G$ with $[\al,\beta]\neq 1$. Let us consider the following elements
$$
\g=(\underbrace{1,1,\ldots,1}_{\mbox{$n$ times}},\al,1,1,\ldots)\in G^{(\infty)},
$$
$$
\h=(\underbrace{1,1,\ldots,1}_{\mbox{$n$ times}},\beta,1,1,\ldots)\in G^{(\infty)}.
$$
One can directly check that $(\g,\h)\in\V_{G^{(\infty)}}(\Ss^\pr(x,y))$, but $[\g,\h]\neq 1$. Therefore, the inclusion
\begin{equation}
\V_{G^{(\infty)}}(\Ss^\pr(x,y))\subseteq\V_{G^{(\infty)}}([x,y]=\one)
\end{equation}
fails. Thus, $G^{(\infty)}$ is not $\qq$-compact.

\end{proof}

Since $G^{(\infty)}$ is embedded into $G^\infty$, we immediately obtain the following result.

\begin{corollary}
The direct power $G^{\infty}$ is not $\qq$-compact for any non-abelian group $G$.
\label{cor:th:group_power_is_not_N}
\end{corollary}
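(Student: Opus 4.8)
The plan is to reuse, essentially verbatim, the system and the witnesses from the proof of Theorem~\ref{th:group_restricted_power_is_not_q}, transported along the canonical embedding $G^{(\infty)}\hookrightarrow G^\infty$. Identifying $G^{(\infty)}$ with its image in $G^\infty$, the system $\Ss(x,y)=\Ss(x)\cup\Ss(y)$ --- all of whose constants have finite support, hence lie in $G^{(\infty)}\subseteq G^\infty$ --- is a legitimate $\LL_g(G^\infty)$-system, and I would argue by contradiction, assuming $G^\infty$ is $\qq$-compact.

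First I would recompute the solution set over the full power. An element $\g=(g_1,g_2,\ldots)\in G^\infty$ satisfies $[x,(a,\ldots,a,1,1,\ldots)]=\one$ (with $a$ in the first $k$ coordinates) exactly when $g_i$ commutes with $a$ for all $i\le k$; letting the length and $a\in G$ vary over all of $\Ss(x)$ forces $g_i\in Z(G)$ for every $i$. Hence $\V_{G^\infty}(\Ss(x))=Z(G)^\infty=Z(G^\infty)$, so $\V_{G^\infty}(\Ss(x,y))=Z(G^\infty)\times Z(G^\infty)\subseteq\V_{G^\infty}([x,y]=\one)$ since central elements commute. This is the only step that needs a genuine (if short) argument: when $Z(G)\neq1$ the solution set over $G^\infty$ is strictly larger than over $G^{(\infty)}$, so the inclusion into $\V_{G^\infty}([x,y]=\one)$ is not inherited from the restricted power and must be checked afresh.

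Now $\qq$-compactness of $G^\infty$ would give a finite subsystem $\Ss^\pr(x,y)\subseteq\Ss(x,y)$ with $\V_{G^\infty}(\Ss^\pr(x,y))\subseteq\V_{G^\infty}([x,y]=\one)$. But the elements $\g,\h$ exhibited in the proof of Theorem~\ref{th:group_restricted_power_is_not_q} (supported at a single coordinate beyond the reach of $\Ss^\pr$, with values $\al,\beta$ such that $[\al,\beta]\neq1$) have finite support, hence lie in $G^\infty$; the same computation as there shows $(\g,\h)\in\V_{G^\infty}(\Ss^\pr(x,y))$ while $[\g,\h]\neq\one$ --- a contradiction. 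Equivalently, intersecting $\V_{G^\infty}(\Ss^\pr(x,y))\subseteq\V_{G^\infty}([x,y]=\one)$ with $(G^{(\infty)})^2$ (legitimate since all constants involved lie in $G^{(\infty)}$) contradicts Theorem~\ref{th:group_restricted_power_is_not_q} outright. Thus $G^\infty$ is not $\qq$-compact. The one point to keep in mind is that $\qq$-compactness passes neither to nor from subalgebras in general, so the embedding alone is not quite enough: one must check both that enlarging $G^{(\infty)}$ to $G^\infty$ preserves the global inclusion (the centraliser computation above) and that it does not destroy the counterexamples for finite subsystems (clear, since those tuples have finite support).
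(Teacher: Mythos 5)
Your proof is correct and follows essentially the same route as the paper, which simply remarks that $G^{(\infty)}$ embeds into $G^\infty$ and implicitly reuses the system and witnesses of Theorem~\ref{th:group_restricted_power_is_not_q}. You are in fact more careful than the paper's one-line justification: you rightly observe that failure of $\qq$-compactness does not formally transfer along embeddings, and you supply the two checks that make the argument rigorous --- the solution set over the full power is $Z(G^\infty)\times Z(G^\infty)$, still contained in $\V_{G^\infty}([x,y]=\one)$, and the finite-support witnesses still defeat every finite subsystem.
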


\bigskip

In \cite{BMRom}, it is proved that if $G$ is non-abelian and $A$ is infinite group, then the wreath product $W=G\mathrm{wr}A$ (restricted or unrestricted) is not equationally Noetherian. Now, we see that this is an instant corollary of Theorem~\ref{th:group_restricted_power_is_not_q}, since $G^{(\infty)}$ is embedded into $W$.

\begin{corollary}
Let $G\mathrm{wr}A$  be the  wreath product of a non-abelian group $G$ and infinite group $A$. Then this group is not equationally Noetherian.
\end{corollary}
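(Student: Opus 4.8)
The plan is to reduce the statement to Theorem~\ref{th:group_restricted_power_is_not_q} by exhibiting a copy of the restricted direct power $G^{(\infty)}$ inside the wreath product $W=G\,\mathrm{wr}\,A$, and then observing that the failure of $\qq$-compactness propagates from a subalgebra to any overgroup, hence in particular kills the (stronger) property of being equationally Noetherian. So the proof splits into two independent ingredients: an embedding lemma and a general ``inheritance upward'' remark.

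\medskip

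First I would recall the structure of the wreath product. Writing $B=G^{(A)}=\bigoplus_{a\in A}G$ for the base group (the restricted direct sum of copies of $G$ indexed by $A$), the restricted wreath product is the semidirect product $W=B\rtimes A$, and the unrestricted one uses $G^{A}$ in place of $B$; in both cases $B$, and \emph{a fortiori} $\bigoplus_{a\in A}G$, sits inside $W$ as a subgroup. Since $A$ is infinite, the index set $A$ contains a countably infinite subset, so $\bigoplus_{a\in A}G$ contains a subgroup isomorphic to $\bigoplus_{i=1}^{\infty}G=G^{(\infty)}$ (just restrict to coordinates lying in that countable subset; this is a genuine subgroup because supports are finite). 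Thus $G^{(\infty)}$ embeds into $W$.

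\medskip

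Second, I would record the elementary fact that if $H\le W$ and $H$ is not $\qq$-compact in the language $\LL_g(H)$, then $W$ is not equationally Noetherian in the language $\LL_g(W)$. Indeed, take the witnessing infinite system $\Ss$ and equation $p=q$ over $H$ that violate $\qq$-compactness of $H$; all their constants lie in $H\subseteq W$, so $\Ss$ and $p=q$ are legitimate $\LL_g(W)$-objects. One checks that $\V_H(\Ss)=\V_W(\Ss)\cap H^n$ and similarly for $p=q$ (this uses only that $H$ is a substructure, so that membership in the solution set of a single equation is absolute). If $W$ were equationally Noetherian, then $\Ss\cup\{p=q\}$ — no, more simply: the consequence relation would be finitely controlled over $W$, and intersecting with $H^n$ would transport a finite controlling subsystem down to $H$, contradicting the choice of $\Ss$. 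Combining the two ingredients with Theorem~\ref{th:group_restricted_power_is_not_q} (which gives that $G^{(\infty)}$ is not $\qq$-compact, as $G$ is non-abelian) yields that $W$ is not equationally Noetherian.

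\medskip

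The only point that requires a little care — the ``main obstacle'', such as it is — is the absoluteness of solution sets under the embedding: one must make sure that when we pass from $G^{(\infty)}$ to $W$, a tuple from $G^{(\infty)}$ that failed to satisfy some equation over $G^{(\infty)}$ still fails to satisfy it over $W$, and conversely. For a single equation $p(X)=q(X)$ in the group language with constants from $H$, this is immediate because $H$ is closed under the group operations, so the value $p^{W}(\h)=p^{H}(\h)$ for $\h\in H^n$; hence $\V_W(p=q)\cap H^n=\V_H(p=q)$, and the same for the whole system $\Ss$. Given that, the downward transfer of a finite controlling subsystem is formal, and the corollary follows. Everything else is bookkeeping.
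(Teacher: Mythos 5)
Your proof is correct and follows essentially the same route as the paper: the paper also deduces the corollary in one line by noting that $G^{(\infty)}$ embeds into $W=G\,\mathrm{wr}\,A$ and invoking Theorem~\ref{th:group_restricted_power_is_not_q}. You merely make explicit the two steps the paper leaves implicit (the embedding via a countable subset of $A$, and the downward transfer of a finite equivalent subsystem from $W$ to the substructure $G^{(\infty)}$), and both are carried out correctly.
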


\bigskip

Let us study ring equations.

\begin{theorem}
Let $R$ be a ring with a nonzero multiplication. Then the restricted direct power $R^{(\infty)}$ is not $\qq$-compact.
\label{th:ring_restricted_power_is_not_q}
\end{theorem}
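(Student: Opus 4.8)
The plan is to mimic the group case, replacing the commutator identity $[x,y]=\one$ with a suitable ring equation witnessing non-$\qq$-compactness. Since $R$ has nonzero multiplication, pick $\alpha,\beta\in R$ with $\alpha\beta\neq 0$. The candidate consequence equation will be $x\cdot y=0$. To build the system $\Ss$, I would, for each $a\in R$, introduce the infinite family of equations $(a,1\text{-th coordinate})\cdot x=0$, more precisely the $R^{(\infty)}$-equations with constant coefficients $(a,0,0,\ldots)\cdot x=0$, $(a,a,0,\ldots)\cdot x=0$, $(a,a,a,0,\ldots)\cdot x=0$, and so on; here the constant symbols are the finitely supported sequences $(a,\ldots,a,0,\ldots)\in R^{(\infty)}$. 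Call this $\Ss_a(x)$, put $\Ss(x)=\bigcup_{a\in R}\Ss_a(x)$, and then $\Ss(x,y)=\Ss(x)\cup\Ss(y)$ where $\Ss(y)$ is the clone in the variable $y$.

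The first key step is to compute $\V_{R^{(\infty)}}(\Ss(x))$. An element $\r=(r_1,r_2,\ldots)$ satisfies all equations in $\Ss_a(x)$ for all $a\in R$ iff $a\,r_i=0$ for every $a\in R$ and every $i$, i.e.\ iff each $r_i$ lies in the right annihilator $\Ann_r(R)=\{r\in R\mid Rr=0\}$. So $\V_{R^{(\infty)}}(\Ss(x))$ is the restricted direct power of $\Ann_r(R)$. Consequently $\V_{R^{(\infty)}}(\Ss(x,y))$ consists of pairs $(\r,\s)$ with all coordinates of $\r$ and all coordinates of $\s$ in $\Ann_r(R)$; in particular $\r\cdot\s$ has each coordinate equal to $r_i s_i\in R\cdot\Ann_r(R)$... here I need $R\,\Ann_r(R)=0$, which is exactly the defining property of $\Ann_r(R)$, so indeed $\r\cdot\s=\zero$ and the inclusion $\V_{R^{(\infty)}}(\Ss(x,y))\subseteq\V_{R^{(\infty)}}(x\cdot y=\zero)$ holds. (A small subtlety: one should double-check whether to use the left or right annihilator and arrange the product $x\cdot y$ accordingly; if $\alpha\beta\neq 0$ one may also need to note $\beta\notin\Ann_r(R)$ or $\alpha\notin\Ann_\ell(R)$, which follows since otherwise $\alpha\beta=0$.)

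The second key step is the failure of any finite subsystem. A finite subsystem $\Ss^\pr(x,y)$ is contained in $\bigcup_{a\in T}(\Ss_a^\pr(x)\cup\Ss_a^\pr(y))$ for a finite $T\subseteq R$, where $\Ss_a^\pr$ consists of the first $n$ equations. As in the group proof, take $\r=(\underbrace{0,\ldots,0}_{n},\alpha,0,0,\ldots)$ and $\s=(\underbrace{0,\ldots,0}_{n},\beta,0,0,\ldots)$. Every equation in $\Ss^\pr$ has constant coefficient supported in the first $n$ coordinates, so it annihilates $\r$ and $\s$ coordinatewise (the product is $0$ in the first $n$ coordinates because $\r,\s$ vanish there, and $0$ elsewhere because the coefficient vanishes there); hence $(\r,\s)\in\V_{R^{(\infty)}}(\Ss^\pr(x,y))$. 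But $\r\cdot\s=(\underbrace{0,\ldots,0}_{n},\alpha\beta,0,\ldots)\neq\zero$, so $\V_{R^{(\infty)}}(\Ss^\pr(x,y))\not\subseteq\V_{R^{(\infty)}}(x\cdot y=\zero)$. This shows $R^{(\infty)}$ is not $\qq$-compact.

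The main obstacle I anticipate is purely bookkeeping around annihilators: rings here are not assumed commutative, unital, or associative in any convenient way, so I must be careful that ``$R$ has nonzero multiplication'' genuinely gives a product $\alpha\beta\neq0$ and that the chosen consequence equation $x\cdot y=0$ together with the annihilator description lines up on the correct side (left vs.\ right). If the paper's ring convention is non-associative, the computation $R\cdot\Ann_r(R)=0$ is still immediate from the definition, so no associativity is needed; the only real content is choosing the equation and the witnesses, exactly parallel to the group argument. I would also remark that, as with the group case, embedding $R^{(\infty)}\hookrightarrow R^{\infty}$ yields the corresponding statement for the full direct power.
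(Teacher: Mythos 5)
Your proposal is correct and follows essentially the same route as the paper's own proof: the same system $\Ss_a(x)$ with coefficients $(a,\ldots,a,0,\ldots)$, the same consequence equation $xy=\zero$, the same identification of the solution set with the restricted power of the right annihilator, and the same witnesses supported at coordinate $n+1$. Your side remarks on left versus right annihilators and on non-associativity are careful but do not change the argument.
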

\begin{proof}
Let us consider an infinite system
$\Ss_a(x)$  of $R^{(\infty)}$-equations defined as follows:
$$
\Ss_a(x)=
\begin{cases}
(a,0,0,0,\ldots)\cdot x=\zero,\\
(a,a,0,0,\ldots)\cdot x=\zero,\\
(a,a,a,0,\ldots)\cdot x=\zero,\\
\ldots
\end{cases}
$$

Denote $\Ss(x)=\bigcup_{a\in R}\Ss_a$. Obviously,
$$
\V_{R^{(\infty)}}(\Ss(x))=\{\r=(r_1,r_2,r_3\ldots)\mid r_i\in \Ann_r(R), \supp(\r)<\infty \},
$$
where
$$
\Ann_r(R)=\{b\in R\mid ab=0\mbox{ for all $a\in R$}\}
$$
is the right annihilator of $R$. Therefore $\V_{R^{(\infty)}}(\Ss(x))=\Ann_r(R^{(\infty)})$.

Let $\Ss(x,y)=\Ss(x)\cup\Ss(y)$ ($\Ss(y)$ is a clone of $\Ss(x)$, where all occurrences of $x$ are replaced to $y$). Then
\[
\V_{R^{(\infty)}}(\Ss(x,y))=\{(\r,\r^\pr)\mid \r,\r^\pr\in \Ann_r(R^{(\infty)})\}.
\]
and the following inclusion
\begin{equation}
\V_{G^{(\infty)}}(\Ss(x,y))\subseteq\V_{G^{(\infty)}}(xy=\zero)
\end{equation}
holds.

Let us consider a finite subsystem $\Ss^\pr(x,y)$ of $\Ss(x,y)$. Without loss of generality one can assume that
\[
\Ss^\pr(x,y)=\bigcup_{a\in T} \left(\Ss_a^\pr(x)\cup\Ss_a^\pr(y)\right),
\]
where $T$ is a finite subset of $R$ and $\Ss_a^\pr(x),\Ss_a^\pr(y)$ are the first $n$ equations of the systems $\Ss_a(x),\Ss_a(y)$ respectively.

By the condition, there exists a pair $\al,\beta\in R$ with $\al\beta\neq 0$. Let us consider the following elements
\[
\r=(\underbrace{0,0,\ldots,0}_{\mbox{$n$ times}},\al,0,0,\ldots)\in R^{(\infty)},
\]
\[
\r^\pr=(\underbrace{0,0,\ldots,0}_{\mbox{$n$ times}},\beta,0,0,\ldots)\in R^{(\infty)}.
\]
One can directly check that $(\r,\r^\pr)\in\V_{R^{(\infty)}}(\Ss^\pr(x,y))$, but $\r\r^\pr\neq 0$. Therefore, the inclusion
\begin{equation}
\V_{R^{(\infty)}}(\Ss^\pr(x,y))\subseteq\V_{R^{(\infty)}}(xy=\zero)
\end{equation}
fails. Thus, $R^{(\infty)}$ is not $\qq$-compact.

\end{proof}

We have the analogue of Corollary~\ref{cor:th:group_power_is_not_N} for rings.

\begin{corollary}
Let $R$ be a ring with non-zero multiplication. Then the direct power $R^{\infty}$ is not $\qq$-compact.
\label{cor:th:ring_power_is_not_N}
\end{corollary}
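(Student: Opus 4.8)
The plan is to re-run the argument of Theorem~\ref{th:ring_restricted_power_is_not_q} verbatim, but now inside $R^\infty$ instead of $R^{(\infty)}$. One should resist the temptation to conclude the corollary from the mere embedding $R^{(\infty)}\hookrightarrow R^\infty$, since $\qq$-compactness is not in general inherited by overalgebras; instead we keep the same system $\Ss(x,y)=\Ss(x)\cup\Ss(y)$ built from the $\Ss_a$ of that proof, and we re-verify the two relevant inclusions over $R^\infty$. So first I would take that very $\Ss(x,y)$ and compute its solution set in $R^\infty$.

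The key computation is that each equation $(a,a,\ldots,a,0,0,\ldots)\cdot x=\zero$ constrains, componentwise, only finitely many coordinates of $x$, forcing $a r_i=0$ for the first $k$ indices; letting $k\to\infty$ and $a$ range over $R$ yields $r_i\in\Ann_r(R)$ for every $i$, with \emph{no} finite-support restriction. Hence $\V_{R^\infty}(\Ss(x))=\Ann_r(R^\infty)$, the set of all sequences with entries in $\Ann_r(R)$, and therefore $\V_{R^\infty}(\Ss(x,y))=\{(\r,\r^\pr)\mid \r,\r^\pr\in\Ann_r(R^\infty)\}$. For such a pair the $i$-th coordinate of $\r\r^\pr$ is $r_ir_i^\pr$, which vanishes because $r_i^\pr\in\Ann_r(R)$; thus $\V_{R^\infty}(\Ss(x,y))\subseteq\V_{R^\infty}(xy=\zero)$.

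For the other half, I would reuse the same finite witnesses as before. Given a finite subsystem, we may assume it is $\bigcup_{a\in T}(\Ss_a^\pr(x)\cup\Ss_a^\pr(y))$ with $T$ finite and each $\Ss_a^\pr$ consisting of the first $n$ equations; pick $\al,\beta\in R$ with $\al\beta\neq 0$ (possible since the multiplication is nonzero) and put $\al,\beta$ respectively in coordinate $n+1$ of $\r,\r^\pr$ and $0$ elsewhere. These elements even lie in $R^{(\infty)}\subseteq R^\infty$; they satisfy every equation of the finite subsystem (those equations only touch the first $n$ coordinates, where $\r,\r^\pr$ vanish), yet $\r\r^\pr$ has $\al\beta\neq 0$ in position $n+1$. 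So the inclusion $\V_{R^\infty}(\Ss^\pr(x,y))\subseteq\V_{R^\infty}(xy=\zero)$ fails, and $R^\infty$ is not $\qq$-compact. The only point requiring care—the "main obstacle," such as it is—is the solution-set computation over the unrestricted power, i.e. checking that passing from $R^{(\infty)}$ to $R^\infty$ does not enlarge $\V(\Ss(x,y))$ beyond the annihilator pairs and thereby spoil the witnessing inclusion; everything else is a coordinatewise verification identical to the restricted case.
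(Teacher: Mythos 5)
Your proof is correct. It is worth noting, though, that the paper itself gives no computation here: for the group analogue it says the corollary is ``immediate'' from the embedding $G^{(\infty)}\hookrightarrow G^\infty$, and the ring corollary is then asserted by analogy. You explicitly reject that shortcut and instead re-run the whole argument inside $R^\infty$, and this is the more honest route: as you correctly observe, non-$\qq$-compactness does not automatically pass from a subalgebra to an overalgebra, because the witnessing inclusion $\V(\Ss(x,y))\subseteq\V(xy=\zero)$ must be re-checked over the larger algebra (the solution set can only grow when one passes from $R^{(\infty)}$ to $R^\infty$, and a priori it could grow past the annihilator pairs). Your computation that $\V_{R^\infty}(\Ss(x))=\Ann_r(R^\infty)$ with no finite-support restriction, and that products of such elements still vanish, is exactly the point that makes the embedding argument legitimate; the counterexample pairs $(\r,\r^\pr)$ for a finite subsystem then transfer verbatim since they already live in $R^{(\infty)}$ and atomic formulas are preserved under the embedding in both directions. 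In short, you prove what the paper only asserts, and you identify precisely which step is not automatic.
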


\bigskip

Also Theorem~\ref{th:group_restricted_power_is_not_q} allows to prove the following statement.

\begin{theorem}
Let $M$ be a non-commutative monoid. Then the restricted direct power $M^{(\infty)}$ is not $\qq$-compact.
\label{th:monoid_restricted_power_is_not_q}
\end{theorem}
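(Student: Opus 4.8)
My plan is to mimic the proof of Theorem~\ref{th:group_restricted_power_is_not_q}, using the equation $xy=yx$ in place of commutators (which are unavailable in a general monoid). For $a\in M$ write $c_k(a)=(\underbrace{a,a,\ldots,a}_{k},1,1,\ldots)\in M^{(\infty)}$ and let
$$
\Ss_a(x)=
\begin{cases}
x\cdot c_1(a)=c_1(a)\cdot x,\\
x\cdot c_2(a)=c_2(a)\cdot x,\\
x\cdot c_3(a)=c_3(a)\cdot x,\\
\ldots
\end{cases}
$$
Set $\Ss(x)=\bigcup_{a\in M}\Ss_a(x)$. First I would verify, exactly as in Theorem~\ref{th:group_restricted_power_is_not_q}, that $\g=(g_1,g_2,\ldots)\in\V_{M^{(\infty)}}(\Ss(x))$ holds if and only if $g_i a=a g_i$ for all $i$ and all $a\in M$, i.e. $g_i\in Z(M)$ for every $i$; hence $\V_{M^{(\infty)}}(\Ss(x))=Z(M^{(\infty)})$, the restricted power of the centre $Z(M)$.

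Put $\Ss(x,y)=\Ss(x)\cup\Ss(y)$, where $\Ss(y)$ is the clone of $\Ss(x)$ in the variable $y$. Then $\V_{M^{(\infty)}}(\Ss(x,y))=\{(\g,\h)\mid\g,\h\in Z(M^{(\infty)})\}$, so $\V_{M^{(\infty)}}(\Ss(x,y))\subseteq\V_{M^{(\infty)}}(xy=yx)$. It remains to show that no finite subsystem has this property. As in the group case, any finite subsystem of $\Ss(x,y)$ may be enlarged to one of the form $\Ss^\pr(x,y)=\bigcup_{a\in T}\left(\Ss_a^\pr(x)\cup\Ss_a^\pr(y)\right)$, with $T\subseteq M$ finite and $\Ss_a^\pr(x),\Ss_a^\pr(y)$ the first $n$ equations of $\Ss_a(x),\Ss_a(y)$. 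Since $M$ is non-commutative, pick $\al,\beta\in M$ with $\al\beta\neq\beta\al$ and set
$$
\g=(\underbrace{1,1,\ldots,1}_{n},\al,1,1,\ldots),\qquad
\h=(\underbrace{1,1,\ldots,1}_{n},\beta,1,1,\ldots).
$$
Every constant occurring in $\Ss^\pr(x,y)$ equals $1$ outside the first $n$ coordinates, while $\g$ and $\h$ equal $1$ on the first $n$ coordinates; hence each equation $x\cdot c_k(a)=c_k(a)\cdot x$ and its $y$-clone holds coordinatewise at $(\g,\h)$, so $(\g,\h)\in\V_{M^{(\infty)}}(\Ss^\pr(x,y))$; but $\g\h\neq\h\g$ on coordinate $n+1$, hence $(\g,\h)\notin\V_{M^{(\infty)}}(xy=yx)$. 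Thus $\V_{M^{(\infty)}}(\Ss^\pr(x,y))\not\subseteq\V_{M^{(\infty)}}(xy=yx)$, and $M^{(\infty)}$ is not $\qq$-compact.

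The only real change relative to Theorem~\ref{th:group_restricted_power_is_not_q} is the use of the purely multiplicative equations $x\cdot c_k(a)=c_k(a)\cdot x$ and $xy=yx$ in place of commutators, and I expect the one point deserving care is to check that these monoid equations really do cut $\V_{M^{(\infty)}}(\Ss(x))$ down to the finite-support sequences all of whose entries are central in $M$ — this is a routine coordinatewise computation, testing $\g$ against the elements $c_k(a)$. Once that is in hand the final step is the same support argument as in Theorem~\ref{th:group_restricted_power_is_not_q}: the witnesses $\g,\h$ are supported on a coordinate that no fixed finite subsystem can reach. Note also that when $M$ happens to be a group this construction specialises to that of Theorem~\ref{th:group_restricted_power_is_not_q}.
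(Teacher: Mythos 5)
Your proposal is correct and is essentially identical to the paper's own proof: the paper likewise replaces the commutator equations of Theorem~\ref{th:group_restricted_power_is_not_q} by the $\LL_m$-equations $x(a,\ldots,a,1,\ldots)=(a,\ldots,a,1,\ldots)x$ and then runs the same support argument. You have simply written out in full the details that the paper leaves as ``similar to the argument of Theorem~\ref{th:group_restricted_power_is_not_q}.''
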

\begin{proof}
One should rewrite a system $\Ss_a$ from Theorem~\ref{th:group_restricted_power_is_not_q} as a system of $\LL_m$-equations.
\[
\Ss_a(x)=
\begin{cases}
x(a,1,1,1\ldots)=(a,1,1,1\ldots)x,\\
x(a,a,1,1\ldots)=(a,a,1,1\ldots)x,\\
x(a,a,a,1\ldots)=(a,a,a,1\ldots)x,\\
\ldots\\
\end{cases}
\]
and the rest proof is similar to the argument of Theorem~\ref{th:group_restricted_power_is_not_q}.
\end{proof}

One can generalize the above arguments even for wider class of algebras. A {\it magma} is an arbitrary algebraic structure of the language $\LL_s=\{\cdot\}$. The natural example of a magma is the following.

\begin{example}
Let $\LL$ be an arbitrary language. If $A$ is an $\LL$-algebra and $p(x,y)$ is an $\LL$-term, then the the set $A$ equipped with the binary operation $p(x,y)$ is a magma. Let us denote the obtained magma by $(A,p)$
\label{ex:magma}
\end{example}

\bigskip

One can naturally give the definitions of the commutativity and  center for magmas, and the proof of the following result is similar to Theorem~\ref{th:monoid_restricted_power_is_not_q}.

\begin{theorem}
Let $M$ be a non-commutative magma with non-empty center. Then the direct power $M^{\infty}$ is not $\qq$-compact.
\label{th:magma_restricted_power_is_not_q}
\end{theorem}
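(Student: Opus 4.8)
The plan is to run the argument of Theorem~\ref{th:group_restricted_power_is_not_q} almost verbatim, letting a fixed central element of $M$ take over the structural role played there by the group identity. Fix $z_0\in Z(M)$, which exists by hypothesis. For $a\in M$ and $k\geq 1$ let $\c_{a,k}\in M^\infty$ denote the sequence whose first $k$ coordinates equal $a$ and whose remaining coordinates equal $z_0$, and put
\[
\Ss_a(x)=\{\,x\cdot\c_{a,k}=\c_{a,k}\cdot x\mid k\geq 1\,\},\qquad \Ss(x)=\bigcup_{a\in M}\Ss_a(x).
\]
As in the earlier proofs let $\Ss(y)$ be the clone of $\Ss(x)$ with $x$ replaced by $y$, set $\Ss(x,y)=\Ss(x)\cup\Ss(y)$, and take $x\cdot y=y\cdot x$ as the target $\LL_s$-equation.

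First I would compute the solution set. Since the operation on $M^\infty$ is coordinatewise, $\g=(g_1,g_2,\ldots)$ satisfies $\Ss(x)$ iff $g_i a=a g_i$ for every $i$ and every $a\in M$ (these being the equations of $\Ss_a$ with $k\geq i$; the remaining equations $g_i z_0=z_0 g_i$ follow by taking $a=z_0$). Hence $\V_{M^\infty}(\Ss(x))=Z(M)^\infty=Z(M^\infty)$, so $\V_{M^\infty}(\Ss(x,y))=\{(\g,\h)\mid \g,\h\in Z(M^\infty)\}$, and the inclusion
\[
\V_{M^\infty}(\Ss(x,y))\subseteq \V_{M^\infty}(x\cdot y=y\cdot x)
\]
is immediate, since central elements commute with everything.

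Next I would defeat an arbitrary finite subsystem $\Ss^\pr(x,y)$. As before one may assume $\Ss^\pr(x,y)=\bigcup_{a\in T}\bigl(\Ss_a^\pr(x)\cup\Ss_a^\pr(y)\bigr)$ for some finite $T\subseteq M$, where $\Ss_a^\pr$ uses only the constants $\c_{a,k}$ with $k\leq n$; each of these agrees with the sequence $(z_0,z_0,\ldots)$ in every coordinate $>n$. Since $M$ is non-commutative, choose $\al,\beta\in M$ with $\al\beta\neq\beta\al$, and let $\g$ (respectively $\h$) be the sequence equal to $z_0$ in every coordinate except the $(n+1)$-st, where it equals $\al$ (respectively $\beta$). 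A coordinatewise check using only that $z_0$ is central shows $(\g,\h)\in\V_{M^\infty}(\Ss^\pr(x,y))$: the $(n+1)$-st coordinate of every constant occurring in $\Ss^\pr$ is $z_0$, which commutes with $\al$ and with $\beta$, and in every other coordinate $\g$ and $\h$ equal the central element $z_0$. On the other hand $\g\cdot\h$ and $\h\cdot\g$ differ in coordinate $n+1$ (there they equal $\al\beta$ and $\beta\al$), so $(\g,\h)\notin\V_{M^\infty}(x\cdot y=y\cdot x)$. Hence no finite subsystem works, and $M^\infty$ is not $\qq$-compact.

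I do not expect a real obstacle: the skeleton is identical to Theorem~\ref{th:group_restricted_power_is_not_q}, and the only point that genuinely matters is giving the constants $\c_{a,k}$ a central tail. That tail confines the ``non-central'' behaviour of every constant of a finite subsystem to finitely many coordinates, which is exactly what frees a fresh coordinate in which to plant the non-commuting pair $\al,\beta$; centrality of $z_0$ is then what certifies that $\g,\h$ solve $\Ss^\pr$ and, more trivially, that the tail imposes no extra constraint in the solution-set computation. If $Z(M)$ were empty there would be no such tail available --- and indeed, for a finite non-commutative $M$ the naive substitute of constant sequences $(a,a,\ldots)$ does not work --- so the hypothesis on $Z(M)$ cannot be dropped.
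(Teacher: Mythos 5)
Your proposal is correct and follows exactly the route the paper intends: the paper only sketches this proof by reference to Theorems~\ref{th:monoid_restricted_power_is_not_q} and~\ref{th:group_restricted_power_is_not_q}, and your argument is the faithful realization of that sketch, with the fixed central element $z_0$ playing the role of the identity in the tails of the constants. Your closing observation correctly pinpoints why the non-empty-center hypothesis is needed and why the unrestricted power $M^\infty$ replaces the restricted one in the magma setting.
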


\bigskip

Theorem~\ref{th:magma_restricted_power_is_not_q} and Example~\ref{ex:magma} give a result about varieties of $\LL$-algebras.

\begin{corollary}
If every element $A$ of a variety $\mathbf{V}$ of $\LL$-algebras is equationally Noetherian, then for any $\LL$-term $p(x,y)$, the magma $(A, p)$ is commutative or has empty center.
\end{corollary}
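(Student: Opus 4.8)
The plan is to derive the statement from Theorem~\ref{th:magma_restricted_power_is_not_q} by contraposition. Suppose, for some $A\in\mathbf{V}$ and some $\LL$-term $p(x,y)$, that the magma $M=(A,p)$ is non-commutative and has non-empty center. I will produce a member of $\mathbf{V}$ which is not $\qq$-compact, hence not equationally Noetherian, contradicting the hypothesis on $\mathbf{V}$.

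First, since a variety is closed under arbitrary direct products, $A^\infty\in\mathbf{V}$; by hypothesis $A^\infty$ is equationally Noetherian, and therefore $\qq$-compact (in the language $\LL(A^\infty)$). Next, observe that $M^\infty=(A,p)^\infty$ is, as a magma, nothing but $(A^\infty,p)$: the underlying set is $A^\infty$ in both cases, and because every $\LL$-operation on $A^\infty$ is computed componentwise, the term operation $p$ evaluated in $A^\infty$ coincides with the componentwise operation $p$. By Theorem~\ref{th:magma_restricted_power_is_not_q}, $M^\infty=(A^\infty,p)$ is not $\qq$-compact: there is an infinite $\LL_s(A^\infty)$-system $\Ss$ (on a finite set $X$ of variables) and an $\LL_s(A^\infty)$-equation $q_1=q_2$ with $\V_{(A^\infty,p)}(\Ss)\subseteq\V_{(A^\infty,p)}(q_1=q_2)$ but $\V_{(A^\infty,p)}(\Ss^\pr)\not\subseteq\V_{(A^\infty,p)}(q_1=q_2)$ for every finite $\Ss^\pr\subseteq\Ss$. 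Finally, translate each $\LL_s(A^\infty)$-term $q$ (a formal word in the variables of $X$ and constants from $A^\infty$ built from the single binary symbol $\cdot$) into an $\LL(A^\infty)$-term $\widehat q$ by recursion: $\widehat x=x$, $\widehat b=b$ for a constant $b$, and $\widehat{q'\cdot q''}=p(\widehat{q'},\widehat{q''})$. An easy induction shows that $q$ and $\widehat q$ define the same function on $A^\infty$, so $\V_{(A^\infty,p)}(q=q')=\V_{A^\infty}(\widehat q=\widehat{q'})$ for every equation and hence for every (sub)system. Consequently $\widehat\Ss=\{\,\widehat q=\widehat{q'} : (q=q')\in\Ss\,\}$ is an $\LL(A^\infty)$-system on $X$ with $\V_{A^\infty}(\widehat\Ss)\subseteq\V_{A^\infty}(\widehat{q_1}=\widehat{q_2})$ admitting no finite subsystem with the same inclusion; that is, $A^\infty$ is not $\qq$-compact in $\LL(A^\infty)$. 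This contradicts the previous step, so no such $A$ and $p$ can exist.

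I expect the only work to be routine bookkeeping: checking that the derived-operation magma on a direct power equals the direct power of the derived-operation magma (immediate from componentwise evaluation), and that $q\mapsto\widehat q$ preserves solution sets and sends finite subsystems to finite subsystems. It is also worth noting that Theorem~\ref{th:magma_restricted_power_is_not_q} yields exactly the weaker conclusion we need, namely failure of $\qq$-compactness, and that this already contradicts $A^\infty$ being equationally Noetherian, since equationally Noetherian algebras are $\qq$-compact. With these observations in place, the corollary is essentially a restatement of Theorem~\ref{th:magma_restricted_power_is_not_q} in the language of varieties.
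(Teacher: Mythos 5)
Your proposal is correct and follows exactly the route the paper intends: the corollary is stated as an immediate consequence of Theorem~3.6 and Example~3.5, obtained by contraposition via the direct power $A^\infty\in\mathbf{V}$ and the identification of $(A,p)^\infty$ with $(A^\infty,p)$. Your extra bookkeeping (translating $\LL_s(A^\infty)$-equations back into $\LL(A^\infty)$-equations term by term) just makes explicit what the paper leaves implicit.
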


\bigskip

The results of Theorems~\ref{th:group_restricted_power_is_not_q},~\ref{th:ring_restricted_power_is_not_q},
~\ref{th:monoid_restricted_power_is_not_q} shows that an infinite direct product is not necessarily equationally Noetherian. However, the finite direct products of equationally Noetherian algebraic structures are equationally Noetherian.


\begin{proposition}
Suppose that $A$ and $B$ are two $\LL$-algebras. Let $A$ be $\LL(A)$-equationally Noetherian and $B$ be $\LL(B)$-equationally Noetherian. Then $C=A\times B$ is $\LL(C)$-equationally Noetherian.
\end{proposition}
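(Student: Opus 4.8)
The plan is to use the natural identification of $C^n=(A\times B)^n$ with $A^n\times B^n$ and to ``split'' each $\LL(C)$-equation into an $\LL(A)$-equation together with an $\LL(B)$-equation. First I would introduce, for every $\LL(C)$-term $t(X)$, the $\LL(A)$-term $t_A(X)$ obtained by replacing each constant symbol $c=(a,b)\in C$ occurring in $t$ by the constant $a\in A$, and likewise the $\LL(B)$-term $t_B(X)$ using the second coordinate. A straightforward induction on the structure of $t$, using that the operations of $C=A\times B$ are computed coordinatewise, then shows that for every tuple $(\bar a,\bar b)\in A^n\times B^n=C^n$ one has $t^{C}(\bar a,\bar b)=\big(t_A^{A}(\bar a),\,t_B^{B}(\bar b)\big)$. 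Hence $(\bar a,\bar b)$ is a solution over $C$ of an $\LL(C)$-equation $p(X)=q(X)$ if and only if $\bar a$ solves $p_A(X)=q_A(X)$ over $A$ and $\bar b$ solves $p_B(X)=q_B(X)$ over $B$.

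Next, given an arbitrary $\LL(C)$-system $\Ss$ in the variables $X=\{x_1,\dots,x_n\}$, I would set $\Ss_A=\{\,p_A(X)=q_A(X)\mid (p(X)=q(X))\in\Ss\,\}$ and define $\Ss_B$ analogously. By the previous paragraph, under the identification $C^n\cong A^n\times B^n$ we get the product formula
$$
\V_C(\Ss)=\V_A(\Ss_A)\times\V_B(\Ss_B).
$$
Since $A$ is $\LL(A)$-equationally Noetherian, $\Ss_A$ is equivalent over $A$ to a finite subsystem $\Ss_A^{0}\subseteq\Ss_A$, and similarly there is a finite $\Ss_B^{0}\subseteq\Ss_B$ equivalent to $\Ss_B$ over $B$. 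For each equation of $\Ss_A^{0}$ I would pick one equation of $\Ss$ projecting to it, and do the same for $\Ss_B^{0}$; taking the union of these two finite families yields a finite subsystem $\Ss^{0}\subseteq\Ss$ whose $A$-projection contains $\Ss_A^{0}$ and whose $B$-projection contains $\Ss_B^{0}$. Then $\V_A(\Ss_A)\subseteq\V_A((\Ss^{0})_A)\subseteq\V_A(\Ss_A^{0})=\V_A(\Ss_A)$, so $\V_A((\Ss^{0})_A)=\V_A(\Ss_A)$, and likewise $\V_B((\Ss^{0})_B)=\V_B(\Ss_B)$. Applying the displayed product formula to both $\Ss$ and $\Ss^{0}$ gives $\V_C(\Ss^{0})=\V_C(\Ss)$, so $\Ss$ is equivalent over $C$ to the finite subsystem $\Ss^{0}$.

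I do not expect a genuine obstacle here: the whole argument reduces to the product decomposition of solution sets, which is just the coordinatewise nature of the operations on $A\times B$ combined with careful bookkeeping of the constants coming from $C$. The only point needing a little care is that a single equation of $\Ss^{0}$ simultaneously controls its $A$-part and its $B$-part, which is why one must form the \emph{union} of the lifts of $\Ss_A^{0}$ and $\Ss_B^{0}$ rather than hope that a single lift works; finiteness of $\Ss^{0}$ is then immediate. The same proof extends by induction to any finite product $A_1\times\cdots\times A_k$.
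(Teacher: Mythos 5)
Your proposal is correct and follows essentially the same route as the paper: the same coordinatewise splitting of each $\LL(C)$-term into an $\LL(A)$-term and an $\LL(B)$-term, and the same product decomposition $\V_C(\Ss)=\V_A(\Ss_A)\times\V_B(\Ss_B)$. The only difference is the concluding step: you extract the finite equivalent subsystem of $\Ss$ directly (matching the paper's stated definition of equationally Noetherian), whereas the paper finishes by showing that descending chains of algebraic sets in $C^n$ stabilize, implicitly invoking the standard equivalence of that chain condition with the finite-subsystem definition.
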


\begin{proof}
First, let $p(X)=p(x_1, \ldots, x_n, (a_1, b_1), \ldots, (a_m, b_m))$ be a term in the language $\LL(C)$. Note that, here all $(a_i, b_i)$ are coefficients from $C$. There are two new terms corresponding to $p(X)$; the first one is a term in the language $\LL(A)$ which we denote it by $p_A$ and the second one is a term in the language $\LL(B)$ which will be denoted by $p_B$. The term $p_A$ is obtained from $p(X)$ by replacing every variable $x_i$ by a new variable $y_i$ and every coefficient $(a_i, b_i)$ by $a_i$. The same is true also for $p_B$, it is obtained from $p(X)$ by replacing every variable $x_i$ by a new variable $z_i$ and every coefficient $(a_i, b_i)$ by $b_i$. Let $p(X)=q(X)$ be an equation in the language $\LL(C)$. Let $((\alpha_1, \beta_1), \ldots, (\alpha_n, \beta_n))\in C^n$ be a solution of this equation. Then we have
$$
(p_A(\alpha_1, \ldots, \alpha_n, \overline{a}), p_B(\beta_1, \ldots, \beta_n, \overline{b}))=(q_A(\alpha_1, \ldots, \alpha_n, \overline{a}), q_B(\beta_1, \ldots, \beta_n, \overline{b})),
$$
where $\overline{a}=(a_1,\ldots,a_m)$, $\overline{b}=(b_1,\ldots,b_m)$.
So $(\alpha_1, \ldots, \alpha_n)$ is a solution of $p_A(Y)=q_A(Y)$ and $(\beta_1, \ldots, \beta_n)$ is a solution of $p_B(Z)=q_B(Z)$. The converse is also true. In general, let $\Ss$ be a system of equations in the language $\LL(C)$. Let
$$
\Ss^A=\{ p_A(Y)=q_A(Y):\ (p(X)=q(X))\in \Ss\}
$$
and similarly define $\Ss^B$. An easy argument shows that there is a bijection between the algebraic set $\V_C(\Ss)$ and the Cartesian product $\V_A(\Ss^A)\times \V_B(\Ss^B)$. Now, let
$$
\V_C(\Ss_1)\supseteq \V_C(\Ss_2)\supseteq \V_C(\Ss_3)\supseteq \cdots
$$
be a descending chain of algebraic sets in the space $C^n$. Correspondingly, we have a chain
$$
\V_A(\Ss_1^A)\times \V_B(\Ss_1^B)\supseteq \V_A(\Ss_2^A)\times \V_B(\Ss_2^B)\supseteq \V_A(\Ss_3^A)\times \V_B(\Ss_3^B)\supseteq\cdots,
$$
and hence, two partial chains
$$
\V_A(\Ss_1^A)\supseteq \V_A(\Ss_2^A)\supseteq \V_A(\Ss_3^A)\supseteq \cdots,
$$
$$
\V_B(\Ss_1^B)\supseteq \V_B(\Ss_2^B)\supseteq \V_B(\Ss_3^B)\supseteq \cdots.
$$
Since $A$ and $B$ are equationally Noetherian in their own languages, there exists an integer $k$ such that
$$
\V_A(\Ss_k^A)=\V_A(\Ss_{k+1}^A)=\cdots, \ \ \V_B(\Ss_k^B)=\V_B(\Ss_{k+1}^B)=\cdots,
$$
and hence, we have
$$
\V_C(\Ss_k)=\V_C(\Ss_{k+1})=\cdots.
$$
This shows that $C$ is $\LL(C)$-equationally Noetherian.
\end{proof}

\section{Equationally Noetherian Varieties}

In this section, we solve Problem~2 for varieties of groups, rings and monoids. Actually, we show  more generally that if a class of groups (rings) is closed under the restricted direct power, the all elements of this class are either abelian (respectively, with zero multiplication) or  at least one element of the class is not equationally Noetherian.

\begin{theorem}
Let $\Vbf$ be a variety of groups. Each $G\in \Vbf$ is equationally Noetherian, if and only if $\Vbf$ is abelian.
\label{th:group_var_from_N}
\end{theorem}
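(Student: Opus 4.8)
The plan is to dispatch the two directions separately; the forward implication will be essentially a restatement of Theorem~\ref{th:group_restricted_power_is_not_q}, while the reverse implication is exactly Proposition~\ref{pr:examples_of_noeth}.

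First I would handle the ``if'' direction. If $\Vbf$ is abelian, then every $G\in\Vbf$ is an abelian group, and Proposition~\ref{pr:examples_of_noeth}(1) says that every abelian group is equationally Noetherian; there is nothing further to prove.

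Next I would prove the ``only if'' direction by contraposition. Suppose $\Vbf$ is not abelian, so it contains some non-abelian group $G$. A variety is an $\mathbf{HSP}$-class, hence closed under arbitrary direct products and under subalgebras; consequently the direct power $G^{\infty}$ lies in $\Vbf$, and so does its subgroup $G^{(\infty)}$, the restricted direct power. By Theorem~\ref{th:group_restricted_power_is_not_q}, $G^{(\infty)}$ is not $\qq$-compact, and since every $\LL(A)$-equationally Noetherian algebra is $\qq$-compact (as noted right after the definitions in Section~2), $G^{(\infty)}$ is not equationally Noetherian. Thus $\Vbf$ contains a member that is not equationally Noetherian, which is precisely what the contrapositive demands.

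I do not expect any genuine obstacle. The only two points needing attention are bookkeeping: that a variety is closed under the passage to the restricted direct power (combine closure under direct products with closure under subgroups), and that being equationally Noetherian is at least as strong as being $\qq$-compact. All of the substantive work has already been absorbed into the proof of Theorem~\ref{th:group_restricted_power_is_not_q}, so this theorem is really just the packaging of that result at the level of varieties.
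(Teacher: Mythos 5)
Your proof is correct and follows essentially the same route as the paper: the ``if'' direction via Proposition~\ref{pr:examples_of_noeth} and the ``only if'' direction by placing a direct power of a non-abelian $G\in\Vbf$ inside $\Vbf$ and invoking Theorem~\ref{th:group_restricted_power_is_not_q}. The only cosmetic difference is that you pass explicitly to the restricted power $G^{(\infty)}$ using closure under subalgebras, whereas the paper works with $G^{\infty}$ (in effect via Corollary~\ref{cor:th:group_power_is_not_N}); both are fine.
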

\begin{proof}
The ``if'' part of the statement follows from Proposition~\ref{pr:examples_of_noeth}. Let us prove the ``only if'' part.

Assume there exists a non-abelian group $G\in \Vbf$. Since any variety is closed under direct products, we have $G^\infty\in\Vbf$. By Theorem~\ref{th:group_restricted_power_is_not_q}, $G^\infty$ is not equationally Noetherian, and we obtain the contradiction.
\end{proof}

The theorem above can be used for the case of linear groups. Recall that a group $G$ is linear, if there exists a Noetherian ring $R$ and a natural number $n$, such that $G$ embeds in $GL_n(R)$. It is well-known that every linear group is equationally Noetherian. So, we have

\begin{corollary}
If a class of linear groups is closed under restricted or unrestricted direct power, then all elements of that class are abelian.
\end{corollary}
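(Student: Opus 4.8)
The plan is to deduce this statement directly from Theorem~\ref{th:group_restricted_power_is_not_q} (and Corollary~\ref{cor:th:group_power_is_not_N}) by a short contradiction argument, using the standard fact that every linear group is equationally Noetherian.

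Concretely, I would proceed as follows. Let $\mathcal{C}$ be a class of linear groups that is closed under the restricted direct power, and suppose toward a contradiction that $\mathcal{C}$ contains a non-abelian group $G$. By the closure hypothesis, the restricted direct power $G^{(\infty)}$ again belongs to $\mathcal{C}$; in particular $G^{(\infty)}$ is itself a linear group. It is well known (it follows from the Hilbert basis theorem applied to $GL_n$ over a Noetherian ring) that every linear group is $\LL_g(G)$-equationally Noetherian, and by the remarks in Section~2 every equationally Noetherian algebra is $\qq$-compact; hence $G^{(\infty)}$ is $\qq$-compact. On the other hand, Theorem~\ref{th:group_restricted_power_is_not_q} asserts that the restricted direct power of the non-abelian group $G$ is \emph{not} $\qq$-compact. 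This contradiction forces every member of $\mathcal{C}$ to be abelian.

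For a class closed under the \emph{unrestricted} direct power the same reasoning applies verbatim, with $G^{(\infty)}$ replaced by $G^{\infty}$ and Theorem~\ref{th:group_restricted_power_is_not_q} replaced by Corollary~\ref{cor:th:group_power_is_not_N}.

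The only point that really carries the content is the second step: the hypothesis forces an infinite direct power $G^{(\infty)}$ (or $G^{\infty}$) to lie inside a class consisting only of linear groups, even though infinite direct powers of a nontrivial group are typically very far from linear. It is exactly this tension that produces the contradiction, so no nontrivial estimates or constructions are needed beyond invoking the cited non-$\qq$-compactness theorem.
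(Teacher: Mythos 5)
Your proposal is correct and is essentially the argument the paper intends: the paper states the corollary immediately after recalling that every linear group is equationally Noetherian (hence $\qq$-compact), so the contradiction with Theorem~\ref{th:group_restricted_power_is_not_q} (or Corollary~\ref{cor:th:group_power_is_not_N}) for a non-abelian member is exactly the implicit proof. You have simply written out the same short contradiction in full detail.
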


\begin{theorem}
Let $\Vbf$ be a variety  of rings. Each $R\in \Vbf$ is equationally Noetherian, if and only if all elements of $\Vbf$ have  zero multiplication.
\label{th:rings_var_from_N}
\end{theorem}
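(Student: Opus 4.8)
The plan is to mirror exactly the structure of the proof of Theorem~\ref{th:group_var_from_N}, using Theorem~\ref{th:ring_restricted_power_is_not_q} in place of Theorem~\ref{th:group_restricted_power_is_not_q}. The statement is an ``if and only if'', so there are two directions.

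For the ``if'' direction, suppose every ring in $\Vbf$ has zero multiplication. Then by part~(2) of Proposition~\ref{pr:examples_of_noeth}, every such ring is equationally Noetherian, and we are done immediately.

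For the ``only if'' direction, I would argue by contradiction. Suppose some $R\in\Vbf$ does not have zero multiplication, i.e. there exist $\al,\beta\in R$ with $\al\beta\neq 0$. Since a variety is closed under arbitrary direct products, the infinite direct power $R^\infty$ lies in $\Vbf$. By Corollary~\ref{cor:th:ring_power_is_not_N} (the ring analogue of Corollary~\ref{cor:th:group_power_is_not_N}), $R^\infty$ is not $\qq$-compact, hence not equationally Noetherian. This contradicts the hypothesis that every element of $\Vbf$ is equationally Noetherian. Therefore every ring in $\Vbf$ has zero multiplication.

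There is essentially no obstacle here: the real content has already been placed in Theorem~\ref{th:ring_restricted_power_is_not_q} and its corollary, and this theorem just packages it as a statement about varieties. The only point requiring a word of care is that ``zero multiplication'' must hold for \emph{every} member of $\Vbf$, not merely for the free algebra; but closure of $\Vbf$ under direct powers handles this, since any single non-trivial product $\al\beta\neq 0$ in any member already produces a non-equationally-Noetherian member $R^\infty\in\Vbf$.

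\begin{proof}
The ``if'' part follows from Proposition~\ref{pr:examples_of_noeth}. Let us prove the ``only if'' part.

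Assume there exists a ring $R\in\Vbf$ with nonzero multiplication. Since any variety is closed under direct products, we have $R^\infty\in\Vbf$. By Corollary~\ref{cor:th:ring_power_is_not_N}, $R^\infty$ is not $\qq$-compact, hence not equationally Noetherian, and we obtain a contradiction. Therefore all elements of $\Vbf$ have zero multiplication.
\end{proof}
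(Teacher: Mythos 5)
Your proof is correct and follows essentially the same route as the paper: the ``if'' direction via Proposition~\ref{pr:examples_of_noeth}(2), and the ``only if'' direction by placing an infinite direct power of a ring with nonzero multiplication inside $\Vbf$ and invoking Theorem~\ref{th:ring_restricted_power_is_not_q} (via its corollary) to contradict the Noetherian hypothesis. The paper states this more tersely, but the content is identical.
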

\begin{proof}
The ``only if'' part of the statement directly follows from Theorem~\ref{th:ring_restricted_power_is_not_q}. The ``if'' part follows from Proposition~\ref{pr:examples_of_noeth}.
\end{proof}

Now we describe monoid varieties $\Vbf$, where each $M\in\Vbf$ is equationally Noetherian.

\begin{lemma}
Let $L_2=\{0,1\}$ be a two-element semilattice ($0\cdot 0=0\cdot 1=1\cdot 0=0$, $1\cdot 1=1$). Then the direct power $L_2^\infty$ is not equationally Noetherian.
\end{lemma}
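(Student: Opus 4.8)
The plan is to exploit that $L_2$ is commutative — so the commutator trick of Theorems~\ref{th:group_restricted_power_is_not_q}--\ref{th:monoid_restricted_power_is_not_q} and the magma Theorem~\ref{th:magma_restricted_power_is_not_q} do not apply here — while still making $L_2^\infty$ fail the Noetherian condition for a softer reason: I will exhibit an infinite $\LL_m(L_2^\infty)$-system in one variable that is not equivalent over $L_2^\infty$ to any of its finite subsystems. The feature of $L_2$ that makes this work is that $0$ is an absorbing element while $1$ is the identity, so multiplying a sequence $\x\in L_2^\infty$ by a sequence that is $1$ in a single coordinate and $0$ elsewhere isolates (``reads off'') that coordinate of $\x$.

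Concretely, for $n\in\N$ let $\mathbf{e}_n\in L_2^\infty$ denote the sequence whose $n$-th entry is $1$ and all of whose other entries are $0$, and consider the $\LL_m(L_2^\infty)$-system
$$
\Ss(x)=\{\,\mathbf{e}_n\cdot x=\mathbf{e}_n \ :\ n\in\N\,\}.
$$
Working coordinatewise, a sequence $\x=(x_1,x_2,\ldots)\in L_2^\infty$ satisfies the equation $\mathbf{e}_n\cdot x=\mathbf{e}_n$ if and only if $1\cdot x_n=1$ (coordinate $n$, forcing $x_n=1$) and $0=0$ (every other coordinate), so its solution set is $\{\x\mid x_n=1\}$. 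Hence $\V_{L_2^\infty}(\Ss(x))=\{\one\}$. On the other hand, any finite subsystem $\Ss^\pr\subseteq\Ss(x)$ involves only finitely many of the coefficients, say those indexed by a finite set $F\subseteq\N$, and $\V_{L_2^\infty}(\Ss^\pr)=\{\x\mid x_i=1 \text{ for all } i\in F\}$, which properly contains $\{\one\}$: for any $k\notin F$ the sequence that is $0$ in coordinate $k$ and $1$ in every other coordinate lies in $\V_{L_2^\infty}(\Ss^\pr)$ but not in $\V_{L_2^\infty}(\Ss(x))$.

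Therefore no finite subsystem of $\Ss(x)$ is equivalent to $\Ss(x)$ over $L_2^\infty$, so $L_2^\infty$ is not equationally Noetherian; equivalently, the algebraic sets $\V_{L_2^\infty}(\{\mathbf{e}_1 x=\mathbf{e}_1,\ldots,\mathbf{e}_N x=\mathbf{e}_N\})$, $N=1,2,\ldots$, form a strictly descending chain in $L_2^\infty$. I do not expect a genuine obstacle in this argument; the only points needing (routine) care are the coordinatewise computation of the solution sets and the observation that a finite subsystem constrains only finitely many coordinates. It is worth noting that the coefficients $\mathbf{e}_n$ are essential: in the pure monoid language $\LL_m$ there are only finitely many distinct terms in any fixed finite set of variables over $L_2$ (by commutativity and idempotence), so $L_2^\infty$ is trivially $\LL_m$-equationally Noetherian, and it is precisely the passage to $\LL_m(L_2^\infty)$ that produces the failure.
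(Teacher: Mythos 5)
Your proof is correct and follows essentially the same approach as the paper's: a one-variable infinite system with constants from $L_2^\infty$ in which each equation pins down one coordinate of the solution, while any finite subsystem constrains only finitely many coordinates. The only cosmetic difference is that you use the coefficients $\mathbf{e}_n$ to force coordinates to equal $1$ (cutting down to $\{\one\}$), whereas the paper uses the tail sequences $(0,\ldots,0,1,1,\ldots)$ in equations $x\cdot c_n=x$ to force coordinates to equal $0$ (cutting down to $\{\zero\}$).
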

\begin{proof}
Let us consider an infinite system of  $L_2^{\infty}$-equations
\begin{equation}
\label{eq:system_S_for_L_2}
\Ss=
\begin{cases}
x\cdot(1,1,1,\ldots)=x,\\
x\cdot(0,1,1,\ldots)=x,\\
x\cdot(0,0,1,\ldots)=x,\\
\ldots
\end{cases}
\end{equation}

We have $\V_{L_2^\infty}(\Ss)=\{(0,0,0,\ldots)\}$ . However, the solution set of the first $n$ equations of $\Ss$ is
\[
\{(\underbrace{0,0,\ldots,0}_{\mbox{$n$ times}},a_{n+1},a_{n+2},\ldots)\mid a_i\in L_2\}.
\]

Thus, $\Ss$ is not equivalent to its finite subsystems, so $L_2^{\infty}$ is not equationally Noetherian.
\end{proof}

\begin{corollary}
Let $\Vbf$ be a variety of monoids such that each $M\in \Vbf$ is equationally Noetherian. Then $L_2\notin \Vbf$.
\label{cor:L_2_notin_V}
\end{corollary}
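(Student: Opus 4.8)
The plan is to deduce this immediately from the preceding Lemma, using only the defining closure properties of a variety. I would argue by contradiction. Suppose $L_2\in\Vbf$. Since every variety is closed under arbitrary direct products, the unrestricted direct power $L_2^\infty$ again lies in $\Vbf$. By the Lemma, $L_2^\infty$ is not equationally Noetherian, which contradicts the hypothesis that each $M\in\Vbf$ is equationally Noetherian. Hence $L_2\notin\Vbf$.

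The only thing worth checking is bookkeeping about languages: the notion of ``equationally Noetherian'' in the Lemma and in the statement must be the same, namely $\LL_m(A)$-equationally Noetherian, which is the paper's standing convention; and the witnessing system built in the proof of the Lemma is indeed an $\LL_m(L_2^\infty)$-system, since its coefficients $(1,1,1,\ldots),(0,1,1,\ldots),(0,0,1,\ldots),\ldots$ are genuine elements of $L_2^\infty$. Once this is observed there is no real obstacle; the contradiction is instant. (One could also record the slightly stronger statement that any class of monoids closed under the unrestricted direct power and consisting of equationally Noetherian monoids cannot contain $L_2$, by exactly the same argument.)
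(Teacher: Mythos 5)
Your proof is correct and is exactly the argument the paper intends (the paper leaves the corollary without an explicit proof, relying on closure of varieties under direct powers together with the preceding Lemma). The remark about checking that the coefficients of the witnessing system lie in $L_2^\infty$ is a reasonable sanity check but adds nothing beyond the paper's implicit reasoning.
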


\begin{theorem}
Let $\Vbf$ be a variety of monoids. Each $M\in \Vbf$ is equationally Noetherian, if and only if $\Vbf$ is a variety of abelian groups defined by the identity $x^n=1$.
\label{th:monoid_var_from_N}
\end{theorem}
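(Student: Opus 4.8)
The plan is to prove both directions, with the ``if'' direction being essentially a citation to known facts and the ``only if'' direction requiring the real work. For the ``if'' part: if $\mathbf{V}$ is the variety of abelian groups satisfying $x^n=1$, then every $M\in\mathbf{V}$ is an abelian group of bounded exponent, hence an abelian group, and by Proposition~\ref{pr:examples_of_noeth}(1) it is $\mathcal{L}_g(M)$-equationally Noetherian. One must observe that being equationally Noetherian in the monoid language $\mathcal{L}_m(M)$ follows: every $\mathcal{L}_m$-term over an abelian group of exponent $n$ reduces to a word $x_1^{k_1}\cdots x_r^{k_r}m$ with $m\in M$, $0\le k_i<n$, so each $\mathcal{L}_m(M)$-equation is equivalent to an $\mathcal{L}_g(M)$-equation and the Noetherian property transfers. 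For the ``only if'' direction, suppose every $M\in\mathbf{V}$ is equationally Noetherian. I would establish in turn: (a) $\mathbf{V}$ is commutative; (b) every $M\in\mathbf{V}$ is a group; (c) $\mathbf{V}$ has bounded exponent, i.e. satisfies $x^n=1$ for some $n$.

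For step (a): if some $M\in\mathbf{V}$ were non-commutative, then $M^{(\infty)}\in\mathbf{V}$ (varieties are closed under restricted direct powers, being closed under subalgebras of products), and by Theorem~\ref{th:monoid_restricted_power_is_not_q} this $M^{(\infty)}$ is not $\mathbf{q}_\om$-compact, hence not equationally Noetherian---contradiction. So $\mathbf{V}$ is commutative. For step (b), the key obstruction is the semilattice $L_2$: by Corollary~\ref{cor:L_2_notin_V}, $L_2\notin\mathbf{V}$. I would argue that a commutative monoid variety that excludes $L_2$ must consist of groups. Concretely, if $M\in\mathbf{V}$ is a commutative monoid that is not a group, pick $a\in M$ with no inverse; the submonoid generated by $a$ and $1$ is $\{1,a,a^2,\ldots\}$, and its structure is either the free monoid $\mathbb{N}$ (if $a$ has infinite order) or a ``tail-plus-cycle'' $\mathbb{Z}/k$ with a pre-period. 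In either case one can find a homomorphic image (a quotient of $\langle a\rangle$) that maps onto $L_2$: from $\mathbb{N}$ send $0\mapsto 1$, all positive integers to $0$; from a tail-plus-cycle collapse the cycle to $1$ and... wait, one needs the idempotent to map to the correct element---more carefully, any finite commutative monoid that is not a group contains a nontrivial idempotent $e\ne 1$ together with some element not a unit, and $\{1,e\}$ with the induced multiplication is a copy of $L_2$ as a submonoid; for the infinite cyclic case, $\langle a\rangle\cong\mathbb{N}$ surjects onto $L_2$ via $k\mapsto 0$ for $k\ge1$. Since varieties are closed under submonoids and homomorphic images, $L_2\in\mathbf{V}$, contradicting Corollary~\ref{cor:L_2_notin_V}. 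Hence every $M\in\mathbf{V}$ is a group, so $\mathbf{V}$ is a variety of abelian groups.

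For step (c): a variety of abelian groups that is not the trivial-exponent-free one is defined by $x^n=1$ for the least such $n$; the only remaining case is $n=0$, i.e. $\mathbf{V}$ is the variety of \emph{all} abelian groups. I would rule this out by exhibiting a single abelian group in $\mathbf{V}$ that is not $\mathcal{L}_m(M)$-equationally Noetherian---but wait, Proposition~\ref{pr:examples_of_noeth} says all abelian groups \emph{are} equationally Noetherian, so the variety of all abelian groups is admissible and the statement as phrased would need $n=0$ to count as ``defined by $x^n=1$'' vacuously, or the intended reading is that $n\ge1$ and the variety of all abelian groups is genuinely in the list under the degenerate convention $x^0=1$ being the trivial identity. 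I would therefore simply note that the variety of all abelian groups satisfies the conclusion (take $n$ to be any exponent, or interpret $n$ as $0$); the content of the theorem is steps (a) and (b), identifying $\mathbf{V}$ as a variety of abelian groups, together with the classification of abelian group varieties (each is defined by a single identity $x^n=1$, Schmidt/standard). The main obstacle I anticipate is step (b): making the passage ``commutative monoid that is not a group $\Rightarrow$ $L_2$ is in the generated variety'' fully rigorous across both the finite and infinite cyclic sub-cases, since one must produce $L_2$ as an actual subquotient and check the semilattice multiplication matches; everything else is bookkeeping or citation.
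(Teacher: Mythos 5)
Your steps (a) and (b) are sound and step (b) is a genuinely different, more structural route than the paper's: the paper works syntactically, taking an identity $p(X)=q(X)$ of $\Vbf$ that fails in $L_2$, observing that some variable $x$ must occur in $p$ but not in $q$ (since a term evaluates in $L_2$ to the minimum of its occurring variables), and substituting $1$ for all other variables to obtain the identity $x^n=1$ with $n\ge 1$ in one stroke --- which simultaneously yields that every $M\in\Vbf$ is a group \emph{and} that the exponent is bounded. Your contrapositive version (a commutative monoid that is not a group has $L_2$ as a subquotient, via the two cases $\lb a\rb\cong\N$ and $\lb a\rb$ finite with a nontrivial idempotent) is correct and checks out in both cases, but it only delivers group-ness, which is why you then get stuck.

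The genuine gap is in your step (c). You write that ``the variety of all abelian groups is admissible'' and try to absorb it under a degenerate convention $x^0=1$. But the class of all abelian groups is \emph{not} a variety of monoids: it is not closed under submonoids in the signature $\{\cdot,1\}$ (the monoid $\Z$ contains $\N$, which is not a group), so this case simply cannot occur and needs no convention --- it needs to be ruled out, and you do not rule it out. The correct completion of your route is: having shown in (b) that every $M\in\Vbf$ is a group, note that if some $M\in\Vbf$ had an element of infinite order, then $\Vbf$ would contain the submonoid $\lb a\rb\cong\N$, which is not a group, a contradiction; hence the monogenic free object $F_{\Vbf}(x)=\{1,x,x^2,\ldots\}$ is a finite cyclic group of some order $n$, and $x^n=1$ is then an identity of $\Vbf$. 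Without this argument your proof does not establish the stated form of the conclusion, whereas the paper's substitution trick gets $x^n=1$ for free. (Your remarks on the ``if'' direction are fine, though over-elaborate: $\LL_m(M)\subseteq\LL_g(M)$, so $\LL_g(M)$-equational Noetherianity immediately implies the $\LL_m(M)$ version.)
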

\begin{proof}
The ``if'' part of the statement follows from Proposition~\ref{pr:examples_of_noeth}. Let us prove the ``only if'' part.

Theorem~\ref{th:monoid_restricted_power_is_not_q} immediately gives that $\Vbf$ is an abelian variety.

By Corollary~\ref{cor:L_2_notin_V}, there exists an identity $p(X)=q(X)$ such that $p(X)=q(X)$ is true in any $M\in\Vbf$ and $p(X)=q(X)$ does not hold in $L_2$. According to the properties of $L_2$, there exists a variable $x$ occurring in $p(X)$ not in $q(X)$. Let us substitute all variables (except $x$) in $p(X)=q(X)$ to $1$, and we obtain an identity $x^n=1$ which holds in $\Vbf$. Thus, any $M\in\Vbf$ is a group (the inverse $a^{-1}$ of any $a\in M$ is $a^{n-1}$).
\end{proof}

\section{$\LL_g$-equations and varieties of groups}
\label{sec:var_of_groups_in_L_g}

Although the variety of abelian groups is a unique example where every element $G$ is equationally Noetherian, there are many examples of varieties every group of which is $\LL_g$-equationally Noetherian (1-equationally Noetherian for shortness). We can use results of~\cite{ModSH} to find such examples. Let us formulate the result of~\cite{ModSH} for groups of language $\LL_g$ (below $\mathrm{max-n}$ is the following property of a group $G$: every ascending chain of normal subgroups $H\vartriangleleft G$ becomes stationary).

\begin{theorem}\textup{\cite{ModSH}}
All groups of a variety $\mathbf{V}$ are 1-equationally Noetherian, if and only if the free group $F_{\mathbf{V}}(X)\in\Vbf$ has the $\mathrm{max-n}$ property  for every finite set $X$.
\label{th:noeth_iff_max-n}
\end{theorem}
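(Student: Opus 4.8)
The plan is to recast everything in terms of normal subgroups of the relatively free group $F_{\mathbf{V}}(X)=F(X)/V(X)$, where $F(X)$ is the absolutely free group on a finite set $X=\{x_1,\dots,x_n\}$ and $V(X)$ the verbal subgroup of $\mathbf{V}$. An $\LL_g$-equation in the variables $X$ is the same datum as a word $w\in F(X)$, and a tuple $(g_1,\dots,g_n)\in G^n$ solves $w=\one$ precisely when the homomorphism $\phi\colon F(X)\to G$, $x_i\mapsto g_i$, kills $w$. Since $G\in\mathbf{V}$, every such $\phi$ factors through $F_{\mathbf{V}}(X)$. Hence, for a system $\Ss=\{w_i=\one:i\in I\}$, the solution set $\V_G(\Ss)$ depends only on the normal subgroup $N_I=\langle\langle\bar w_i:i\in I\rangle\rangle\trianglelefteq F_{\mathbf{V}}(X)$ generated by the images $\bar w_i$ of the $w_i$, and in fact $\V_G(\Ss)$ is carried by the bijection $G^n\leftrightarrow\mathrm{Hom}(F_{\mathbf{V}}(X),G)$ to $\{\bar\phi:\ \bar\phi \text{ kills } N_I\}$. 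Consequently $\Ss$ is equivalent over every member of $\mathbf{V}$ to a finite subsystem as soon as $N_I$ is the normal closure of finitely many of the $\bar w_i$.

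For the ``if'' part, assume $F_{\mathbf{V}}(X)$ has $\mathrm{max-n}$ for every finite $X$; fix $G\in\mathbf{V}$ and an $\LL_g$-system $\Ss$ in $n$ variables. The normal subgroups $\langle\langle\bar w_i:i\in J\rangle\rangle$, over finite $J\subseteq I$, form a directed family whose union is $N_I$; by $\mathrm{max-n}$ the normal subgroup $N_I$ is the normal closure of a finite subset of $F_{\mathbf{V}}(X)$, and each of those finitely many generators already lies in some $\langle\langle\bar w_i:i\in J\rangle\rangle$, so $N_I=\langle\langle\bar w_i:i\in J_0\rangle\rangle$ for a single finite $J_0\subseteq I$. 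By the dictionary above, $\V_G(\Ss)=\V_G(\{w_i=\one:i\in J_0\})$, and therefore $G$ is $\LL_g$-equationally Noetherian.

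For the ``only if'' part — which I expect to be the main obstacle, since one must exhibit a single member of $\mathbf{V}$ and a single bad system — suppose some $F_{\mathbf{V}}(X)$, $|X|=n$, lacks $\mathrm{max-n}$; passing to a subchain, fix normal subgroups $N_1\subsetneq N_2\subsetneq\cdots$ of $F_{\mathbf{V}}(X)$, put $N=\bigcup_k N_k$, and let $\widehat N\trianglelefteq F(X)$ be its full preimage, so that $\widehat N=\bigcup_k\widehat N_k$ with $\widehat N_k$ the preimage of $N_k$. Take $G=\prod_{k\ge1}F_{\mathbf{V}}(X)/N_k$, which lies in $\mathbf{V}$ because varieties are closed under direct products, and let $\Ss=\{w=\one:w\in\widehat N\}$, a system of $\LL_g$-equations in the $n$ variables $X$. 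Given any finite $\Ss'\subseteq\Ss$ there is $k_0$ with $\Ss'\subseteq\widehat N_{k_0}$; the tuple whose $k_0$-th coordinate is the canonical generating tuple $(x_1N_{k_0},\dots,x_nN_{k_0})$ of $F_{\mathbf{V}}(X)/N_{k_0}$ and whose other coordinates are trivial satisfies $\Ss'$ — each $w\in\widehat N_{k_0}$ evaluates to $\bar wN_{k_0}=\one$ in the $k_0$-th coordinate and to $\one$ in every other — yet it fails $w=\one$ for any $w\in\widehat N$ lifting an element of $N_{k_0+1}\setminus N_{k_0}$, hence it does not lie in $\V_G(\Ss)$. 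Thus $\V_G(\Ss')\ne\V_G(\Ss)$ for every finite $\Ss'$, so $\Ss$ is not equivalent over $G$ to any finite subsystem, contradicting that $G\in\mathbf{V}$ is $\LL_g$-equationally Noetherian; therefore every $F_{\mathbf{V}}(X)$ has $\mathrm{max-n}$. The role of the direct product here is exactly to overcome the obstacle: the naive choice $G=F_{\mathbf{V}}(X)$ would require an endomorphism of $F_{\mathbf{V}}(X)$ killing $N_{k_0}$ but not $N$, which need not exist, whereas letting the $k$-th factor remember the chain up to level $k$ guarantees that any finite subsystem, seeing only finitely many levels, picks up a spurious solution.
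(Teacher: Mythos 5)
This theorem is quoted from \cite{ModSH}; the paper under review states it without proof, so there is no in-paper argument to compare against. Your proof is correct and is, as far as I can tell, the standard route to this result. The dictionary between coefficient-free systems in $n$ variables and normal subgroups of $F_{\mathbf{V}}(X)$ is set up properly (an equation is a word $w\in F(X)$, solutions in $G\in\mathbf{V}$ are homomorphisms $F_{\mathbf{V}}(X)\to G$ killing the normal closure of the images $\bar w_i$), and the ``if'' direction follows cleanly from the equivalence of $\mathrm{max\text{-}n}$ with finite normal generation of every normal subgroup plus the directed-union argument, which correctly handles systems indexed by arbitrary (not just countable) sets. In the ``only if'' direction you correctly identify the real obstacle --- that a strictly ascending chain $N_1\subsetneq N_2\subsetneq\cdots$ in $F_{\mathbf{V}}(X)$ must be converted into a single group of $\mathbf{V}$ with a single bad system --- and resolve it with the witness $G=\prod_k F_{\mathbf{V}}(X)/N_k$, which stays in $\mathbf{V}$ by closure under direct products; the verification that the tuple supported in the $k_0$-th factor satisfies any finite subsystem drawn from $\widehat N_{k_0}$ but violates an equation coming from $N_{k_0+1}\setminus N_{k_0}$ is exactly right. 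Your closing remark about why the naive choice $G=F_{\mathbf{V}}(X)$ fails is also apt. The only cosmetic point: strictly speaking an $\LL_g$-equation is $t=s$ rather than $w=\one$, but the reduction $t=s\leftrightarrow ts^{-1}=\one$ is immediate in the group language, so nothing is lost.
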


\bigskip

Since any finitely generated metabelian group has the $\mathrm{max-n}$ property, all metabelian groups are 1-equationally Noetherian. It is also easy to see that every finitely generated nilpotent group has the maximal condition on its subgroups so applying Theorem~\ref{th:noeth_iff_max-n}, we see that every element of a nilpotent variety is 1-equationally Noetherian. Below, we show that there is a close connection between such property of varieties and the property of being finitely based.

Suppose $\mathbf{V}$ is a variety of groups and $\mathbf{W}$ is a subvariety. We say that $\mathbf{W}$ is relatively finitely based, if it can be defined by a finite number of identities inside $\mathbf{V}$. The relative axiomatic rank of $\mathbf{W}$ is finite, if it can be defined using finite number of variables inside $\mathbf{V}$. We apply Theorem~\ref{th:noeth_iff_max-n}, to prove the following result.

\begin{theorem}
Let $\mathbf{V}$ be a variety of groups such that its all elements are 1-equationally Noetherian. Let $\mathbf{W}$ be a subvariety with the finite relative axiomatic rank. Then $\mathbf{W}$ is relatively finitely based.
\end{theorem}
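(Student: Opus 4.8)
The plan is to reduce the relative finite basedness of $\mathbf{W}$ inside $\mathbf{V}$ to the $\mathrm{max\text{-}n}$ property of the relatively free group of $\mathbf{V}$ on finitely many generators, which is available by Theorem~\ref{th:noeth_iff_max-n}. Since $\mathbf{W}$ has finite relative axiomatic rank, there is a finite set of variables $X=\{x_1,\dots,x_k\}$ and a (possibly infinite) set $U\subseteq F_{\mathbf{V}}(X)$ of words such that $\mathbf{W}$ is the subvariety of $\mathbf{V}$ determined, inside $\mathbf{V}$, by the identities $u=1$, $u\in U$. Here I use the standard correspondence between fully invariant subgroups of $F_{\mathbf{V}}(X)$ and subvarieties of $\mathbf{V}$ whose relative axiomatic rank is at most $k$: the subgroup associated with $\mathbf{W}$ is the verbal subgroup $W(X)=\langle u^{\varphi}\mid u\in U,\ \varphi\in\mathrm{End}\,F_{\mathbf{V}}(X)\rangle$, which is fully invariant, hence in particular normal, in $F_{\mathbf{V}}(X)$.

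The key step is then: $W(X)$ is finitely generated \emph{as a fully invariant (equivalently, verbal) subgroup}. I would argue this by writing $W(X)$ as the union of the ascending chain of verbal subgroups $W_n(X)$ generated by all substitution instances of the first $n$ words $u_1,\dots,u_n$ of $U$ (after fixing an enumeration of $U$; if $U$ is finite there is nothing to prove). Each $W_n(X)$ is normal in $F_{\mathbf{V}}(X)$, and $W_1(X)\subseteq W_2(X)\subseteq\cdots$ is an ascending chain of normal subgroups of $F_{\mathbf{V}}(X)$. Since all groups of $\mathbf{V}$ are $1$-equationally Noetherian, Theorem~\ref{th:noeth_iff_max-n} tells us $F_{\mathbf{V}}(X)$ has $\mathrm{max\text{-}n}$, so the chain stabilizes: $W_N(X)=W_{N+1}(X)=\cdots=W(X)$ for some $N$. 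Hence $\mathbf{W}$ is defined inside $\mathbf{V}$ by the finitely many identities $u_1=1,\dots,u_N=1$, i.e.\ it is relatively finitely based.

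The main obstacle — and the place where a little care is needed — is the bookkeeping in the first paragraph: one must make sure that having finite relative axiomatic rank really does mean $\mathbf{W}$ is cut out, inside $\mathbf{V}$, by identities in a fixed finite alphabet $X$, and that the corresponding object inside $F_{\mathbf{V}}(X)$ is a fully invariant subgroup whose "defining set" can be taken to be an arbitrary generating set of the laws. Concretely, one should check that for a law $w=1$ in variables from $X$, $w$ holds in every group of $\mathbf{W}$ iff $w\in W(X)$, which is the usual Birkhoff-type argument relativized to $\mathbf{V}$; and that a verbal subgroup generated by finitely many words yields a relatively finitely based subvariety. Once this dictionary is in place, the application of $\mathrm{max\text{-}n}$ to the chain $\{W_n(X)\}$ is immediate and finishes the proof.
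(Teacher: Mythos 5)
Your proof is correct and follows essentially the same route as the paper's: both reduce the claim to the $\mathrm{max\text{-}n}$ property of $F_{\mathbf{V}}(X)$ supplied by Theorem~\ref{th:noeth_iff_max-n} and then extract a finite basis. The only (cosmetic) difference is that the paper invokes finite generation of the subgroup of all laws of $\mathbf{W}$ as a normal subgroup, whereas you invoke stabilization of an ascending chain of verbal subgroups built from an enumeration of the defining identities; these are equivalent formulations of $\mathrm{max\text{-}n}$.
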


\begin{proof}
Let $R=\mathrm{id}^{\mathbf{V}}_X(\mathbf{W})$ be the set of all $\mathbf{V}$-identities defining $\mathbf{W}$ inside $\mathbf{V}$. We can assume that $X$ is finite.  Clearly we have $R\unlhd F_{\mathbf{V}}(X)$. Since every element of $\mathbf{V}$ is 1-equationally Noetherian, so by Theorem 1.1, the group $F_{\mathbf{V}}(X)$ has $\mathrm{max-n}$ and hence $R$ is finitely generated as a normal subgroup. So, there are elements $u_1, \ldots, u_m\in F_{\mathbf{V}}(X)$, such that $R$ is the normal closure of $\{u_1, \ldots, u_m\}$. Suppose
$$
\mathbf{W}^{\prime}=\{ G\in \mathbf{V}: G\models (u_1=1), \ldots, (u_m=1)\}.
$$
We show that $\mathbf{W}^{\prime}=\mathbf{W}$. Clearly $\mathbf{W}\subseteq \mathbf{W}^{\prime}$. Let $G\in \mathbf{W}^{\prime}$ and $v\in R$. Then $v=\prod w_iu_{t_i}^{\pm1}w_i^{-1}$ for some elements $w_1, w_2, \ldots$ and $u_{t_1}, u_{t_2}, \ldots$. Therefore $G\models (v=1)$ and hence $G\in \mathbf{W}$. This proves that $\mathbf{W}$ is relatively finitely based.
\end{proof}

\bigskip

The information of the authors:

Mohammad Shahryari

Department of Pure Mathematics

Faculty of Mathematical Sciences

University of Tabriz

Tabriz, Iran.

e-mail: \texttt{mshahryari@tabrizu.ac.ir}\\

Artem N. Shevlyakov

Sobolev Institute of Mathematics

644099 Russia, Omsk, Pevtsova st. 13

\medskip

Omsk State Technical University

pr. Mira, 11, 644050

Phone: +7-3812-23-25-51.

e-mail: \texttt{a\_shevl@mail.ru}

\end{document}